\newtheorem{theorem}{Theorem}[section] 
\newtheorem{lemma}[theorem]{Lemma}     
\newtheorem{corollary}[theorem]{Corollary}
\newtheorem{proposition}[theorem]{Proposition}
\def\dexc{Emerald!50!}
\newcommand{\Green}[1]{{\textcolor{PineGreen}{$\mathbf{#1}$}}}
\newcommand{\red}[1]{\textcolor{red}{\mathrm{#1}}}
\newcommand{\green}[1]{\textcolor{PineGreen}{\mathrm{#1}}}
\def\hua{\mathcal}
\def\kong{\mathbb}
\def\<{\langle}
\def\>{\rangle}
\def\ZZ{\mathbb{Z}}
\def\Ind{\mathrm{Ind}}
\def\Sim{\mathrm{Sim}}
\def\Hom{\mathrm{Hom}}
\def\End{\mathrm{End}}
\def\Ext{\mathrm{Ext}}
\def\Irr{\mathrm{Irr}}
\def\Grot{K}
\def\rank{\mathrm{rank}}
\newcommand{\h}{\mathrm{\hua{H}}}            
\renewcommand{\k}{\mathbf{k}}
\renewcommand{\mod}{\mathrm{mod}}
\newcommand{\Ho}[1]{\mathrm{\bf H}_{#1}}
\newcommand{\tilt}[3]{{#1}^{#2}_{#3}}
\newcommand{\Cone}{\mathrm{Cone}}
\def\numbers{\begin{enumerate}[label=\arabic*{$^\circ$}.]}
\def\ends{\end{enumerate}}
\newcommand{\EG}{\mathrm{Eg}}       
\newcommand{\EGp}{\mathrm{Eg}^\circ}       
\newcommand{\CEG}[2]{\mathrm{CEG}_{#1}(#2)}             
\newcommand{\D}{\mathrm{\hua{D}}}
\def\nzero{\hua{H}_Q}
\newcommand{\Q}[1]{\mathcal{Q}(#1)}
\newcommand{\CY}[2]{\mathrm{CY}^{#2}(#1)}
\newcommand{\Qpe}{\widetilde{Q}}
\def\s{\mathbf{s}}
\def\v{\mathbf{v}}
\def\u{\mathbf{u}}
\def\t{\mathbf{t}}
\def\w{\mathbf{w}}
\def\gm{\w}
\newcommand{\Inv}{\mathrm{Inv}}
\newcommand{\Des}{\mathrm{Des}}
\newcommand{\Cov}{\mathrm{Cov}}
\newcommand{\nc}{\mathrm{nc}}
\newcommand{\supp}{\mathrm{supp}}
\newcommand{\add}{\mathrm{add}}
\newcommand{\VR}[1]{\mathrm{V^{r}}(#1)}
\newcommand{\Path}{\mathrm{P}}
\newcommand{\EGq}{\mathrm{Eg}_Q}
\renewcommand{\dim}{\mathrm{\underline{dim}}}
\newcommand{\wide}{\hua{W}}
\newcommand{\res}{\mathrm{res}}
\title{C-sortable words as green mutation sequences}
\author{Yu Qiu}
\begin{document}\maketitle
\begin{abstract}
Let $Q$ be an acyclic quiver and $\s$ be a sequence with elements
in the vertex set $Q_0$.
We describe an induced sequence of simple (backward) tilting
in the bounded derived category $\D(Q)$, starting from the standard heart $\h_Q=\mod \k Q$
and ending at another heart $\h_\s$ in $\D(Q)$.
Then we show that $\s$ is a green mutation sequence if and only if
every heart in this simple tilting sequence is greater than or equal to $\h_Q[-1]$;
it is maximal if and only if $\h_\s=\h_Q[-1]$.
This provides a categorical way to understand green mutations.
Further, fix a Coxeter element $c$ in the Coxeter group $W_Q$ of $Q$,
which is admissible with respect to the orientation of $Q$.
We prove that the sequence $\widetilde{\gm}$
induced by a $c$-sortable word $\w$ is a green mutation sequence.
As a consequence, we obtain a bijection between $c$-sortable words
and finite torsion classes in $\h_Q$.
As byproducts, the interpretations of inversions, descents and cover reflections of
a $c$-sortable word $\w$ are given in terms of the combinatorics of green mutations.

\vskip .3cm
{\parindent =0pt \it Key words:} Coxeter group, $c$-sortable word,
quiver mutation, cluster theory, tilting
\end{abstract}

\section{Introduction}
Cluster algebras were invented by Fomin-Zelevinsky,
attempting to understand total positivity in algebraic groups and canonical bases in quantum groups.
It has been widely studied during the last decade due to
its connection to many areas in mathematics
(cf. the introduction survey \cite{Kel12}).
The crucial combinatorial ingredient in the cluster theory is quiver mutation,
which leads to the categorification of cluster algebras via quiver representation theory
due to Buan-Marsh-Reineke-Reiten-Todorov.

Recently, Keller introduced green (quiver) mutation (Definition~\ref{def:green}).
The point is to consider the principal extension $\Qpe$ of the initial quiver $Q$
by adding certain frozen vertices (and arrows).
The arrows that are incident at these frozen vertices indicate
whether an unfrozen vertex is green or red.
The extra rule is that one can only mutate a green vertex.
Using the extra combinatorial information in green mutation,
he obtained results concerning Kontsevich-Soibelman's noncommutative Donaldson-Thomas invariant
via quantum cluster algebras.
Another motivation for studying green mutation sequences comes from
theoretical physics where they yield the complete spectrum of BPS states, cf. \cite{CCV}.
Keller gave the combinatorial definition of a mutation sequence for a quiver to be green.
Such a sequence is maximal if it can not be extended to a longer green mutation sequence.
Inspired by Keller \cite{Kel11} and Nagao \cite{Nag10},
King-Qiu \cite{KQ11} studied the exchange graphs of hearts in various categories,
with applications to stability conditions and quantum dilogarithm identities in \cite{Qiu11}.
We will mainly use the techniques from \cite{KQ11} to study green mutations.

Our first aim in this paper is to interpret
Keller's green mutation (cf. Definition~\ref{def:green}) in terms of (simple) tilting of hearts.
More precisely, a green mutation sequence $\s\colon Q\rightsquigarrow Q_\s$ induces
a sequence $\Path(\s)$ of simple \emph{backward} tilting (see \S~2.4)
from the canonical heart $$\h_Q=\mod \k Q$$
to another heart $\h_\s$, in the bounded derived category $\D^b(Q)$.
Then we can describe Keller's results about green mutations via this heart $\h_\s$.
Here is a summary of categorical interpretation of green mutations in \S~\ref{sec:keller}.

\begin{theorem}\label{thm:0.1}
Let $Q$ be an acyclic quiver.
\begin{itemize}
\item   A sequence $\s$ is a green mutation sequence if and only if
$\h\geq\h_Q[-1]$ for any $\h$ in the sequence $\Path(\s)$ of tiltings.
In particular, there will be a torsion class $\hua{T}_\s$ in $\h_Q$
such that $\h_\s$ is the backward tilt of $\h_Q$ w.r.t. $\hua{T}_\s$.
\item   A vertex $j$ in $Q_\s$ for some green mutation sequence $\s\colon Q\rightsquigarrow Q_\s$
is green if and only if the corresponding simple $S_j^{\s}$ in $\h_\s$ is in $\h_Q$ and
it is not green (i.e. red)
if and only if $S_j^{\s}$ is in $\h_Q[-1]$.
We will call a simple green/red if the corresponding vertex is green/red.
\item   A green sequence $\s$ is maximal if and only if $\h_\s=\h_Q[-1]$.
Hence the mutated quiver $Q_\s$ is isomorphic to the original quiver $Q$.
\item   There is also an associated wide subcategory $\wide_{\s}$
(that is, an exact abelian subcategory closed under extensions),
generated by the shift (one) of the red simples in $\h_\s$.
\end{itemize}
\end{theorem}

Our second focus is on $c$-sortable words (where $c$ stands for Coxeter element),
introduced by Reading \cite{R07},
who showed bijections between $c$-sortable words, c-clusters and noncrossing partitions in the Dynkin case.
Ingalls-Thomas extended Reading's result in the direction of representation theory
and gave bijections between many sets (see \cite{IT09}).
The bijection between $c$-sortable words and finite torsion classes
was first generalized by Thomas
and also obtained by Amiot-Iyama-Reiten-Todorov \cite{AIRT10} via preprojective algebras.
We will interpret a $c$-sortable word as a green mutation sequence (Theorem~\ref{thm:main})
and obtain many consequences
summarized by the following theorem (see Definition~\ref{def:def} for the relevant notions).

\begin{theorem}\label{thm:0.2}
For an acyclic quiver $Q$ with an admissible Coxeter element $c$,
every $c$-sortable word $\w$ induces a green mutation sequence $\widetilde{\w}$
and we have the following bijections.
\begin{itemize}
\item   $\{$$c$-sortable word $\w\}   \overset{_{1-1}}{\longleftrightarrow}
        \{$finite torsion class $\hua{T}_{\gm}$ in $\h_Q\}$.
\item   $\{$inversion $t_{T}$ for $\w\}     \overset{_{1-1}}{\longleftrightarrow}
        \{$indecomposable $T$ in $\hua{T}_{\gm}\}$.
\item   $\{$descent $s_j$ for $\w\}    \overset{_{1-1}}{\longleftrightarrow}
        \{$red vertex $j$ for $\w\}$.
\item   $\{$cover reflection $t_{T}$ for $\w\}     \overset{_{1-1}}{\longleftrightarrow}
        \{$red simple $T$ in $\h_{\gm}\}$.
\end{itemize}
\end{theorem}
Further, if $Q$ is of Dynkin type,
the noncrossing partition $\nc_c(\w)$ associated to $\w$ can be calculated
as a product (with some order) of $\nc_c(\w)$ for $j\in \VR{\gm}$
with $\rank\nc_c(\w)=\#\VR{\gm}$,
where $\VR{\gm}$ is the set of the red vertices and $s_j^{\gm}$
is the reflection corresponding to the $j$-th simple in $\h_\w$.
Also,
the Hasse diagram of $c$-sortable words (w.r.t. the weak order)
is a supporting tree of the exchange graph $\EG_Q$ (cf. Definition~\ref{def:eg}).
Here, $\EG_Q$ is the Hasse diagram (w.r.t. the inclusion) of torsion classes in $\h_Q$.

These results give a deeper understanding of the results of Ingalls-Thomas \cite{IT09}.
Note that all our bijections are consistent with theirs,
cf. Table~\ref{table} and \cite[Table~1]{IT09}.
Also, the construction from $c$-sortable words to the green mutation sequences
is compatible with the construction of Amiot-Iyama-Reiten-Todorov \cite{AIRT10} (cf. \cite{BIRS09}).
Roughly speaking,
they use tilting theory for projectives and we use tilting theory for simples.

A warning about conventions is that our green mutation sequences for $Q$ is
the green mutation sequences for $Q^{\operatorname{op}}$ in Keller's original setting
(cf. Remark~\ref{rem:op}).

\subsection*{Acknowledgements}
I would like to thank Alastair King, Bernhard Keller, Idun Reiten and Thomas Br\"{u}stle
for helpful conversations.
Also, the work is supported by the Research Council of Norway, grant No.NFR:231000.

\section{Preliminaries}

Fix an algebraically-closed field $\k$.
Throughout this paper, $Q$ will be a finite acyclic quiver
with vertex set $Q_0=\{1,\ldots,n\}$ (unless otherwise stated).
Denote by $\k Q$ the path algebra,
$\h_Q:=\mod\k Q$ the (abelian) category of finite dimensional $\k Q$-modules and
$\D(Q):=\hua{D}^b(\h_Q)$ its bounded derived category.
Note that $\h_Q$ is abelian and $\D(Q)$ is triangulated.
We will write $\Sim\hua{A}$ for a complete set of non-isomorphic simples in
an abelian category $\hua{A}$.
Let
\[
    \Sim\h_Q=\{S_1,\ldots,S_n\},
\]
where $S_i$ is the simple $\k Q$-module corresponding to vertex $i\in Q_0$.

\subsection{Words in the Coxeter group}
Recall that the \emph{Euler form}
\[
    \<-,-\>:\kong{Z}^{Q_0}\times\kong{Z}^{Q_0}\to\kong{Z}
\]
associated to the quiver $Q$ is defined by
\[
    \<\mathbf{a}, \mathbf{b}\>
    =\sum_{i\in Q_0}a_i b_i-\sum_{(i\to j)\in Q_1}a_i b_j.
\]
Denote by $(-,-)$ the symmetrized Euler form,
i.e. $(\mathbf{a},\mathbf{b})=\<\mathbf{a},\mathbf{b}\>+\<\mathbf{b},\mathbf{a}\>$.
Moreover for $M,L\in\mod \k Q$, we have
\begin{gather}\label{eq:euler form}
    \<\dim M,\dim L\>=\dim\Hom(M,L)-\dim\Ext^1(M,L),
\end{gather}
where $\dim E\in\kong{N}^{Q_0}$ is the \emph{dimension vector} of any $E\in\mod \k Q$.
Let $V=\Grot(\k Q)\otimes\kong{R}$,
where $\Grot(\k Q)$ is the Grothendieck group of $\k Q$.
For any non-zero $v\in V$, define a \emph{reflection}
\[
    s_v(u)=u-\frac{2(v,u)}{(u,u)}v.
\]
Moreover, let $\dim E[-1]=-\dim E$ for $E\in\h_Q$ and
we will write $s_M=s_{\dim M}$ for $M\in \h_Q\sqcup\h_Q[-1]$.

The \emph{Coxeter group} $W=W_Q$ is the group of transformations generated by
the \emph{simple reflections} $s_i=s_{\dim S_i}, i\in Q_0$.
The (real) \emph{roots} in $W$ are $\{w(e_i)\mid w\in W, i\in Q_0\}$,
where $\{e_i\}$ are the idempotents;
the \emph{positive roots} are those roots which are non-negative (integral) combinations of $\{e_i\}$.
Note that the reflection of a positive root lies in $W$.
Denote by $\mathrm{T}$
the set of all the reflections of $W$,
that is, the set of all conjugates of the simple reflections of $W$.
A \emph{Coxeter element }for $W$ is the product of the simple reflections in some order.
For a Coxeter element $c=s_{\sigma_1}\ldots s_{\sigma_n}$,
we say it is \emph{admissible} with respect to the orientation of $Q$,
if there is no arrow from $\sigma_i$ to $\sigma_j$ in $Q$ for any $i>j$.

A word $\w$ in $W$ is an expression in the free monoid generated by $s_i, i\in Q_0$.
For $w\in W$, denote by $l(w)$ its \emph{length},
which is the length of the shortest word for $w$ as a product of simple reflections.
A \emph{reduced} word $\w$ for an element $w\in W$ is a word such that $\w=w$ with minimal length.
The notion of reduced word leads to the \emph{weak order} $\leq$ in $W$,
i.e. $x\leq y$ if and only if $x$ has a reduced expression which is a prefix of some reduced word for $y$.

\begin{definition}\label{def:def}
For a word $\w$ in $W_Q$, we have the following notions.
\begin{itemize}
\item   An \emph{inversion} of $\w$ is a reflection $t$ such that $l(t\w)\leq l(\w)$.
The set of inversions of $\w$ is denoted by $\Inv(w)$.
\item   A \emph{descent} of $\w$ is a simple reflection $s$ such that $l(\w s)\leq l(\w)$.
The set of descents of $\w$ is denoted by $\Des(w)$.
\item   A \emph{cover reflection} of $w$ is a reflection $t$ such that $t\w=\w s$ for some descent $s$ of $\w$.
The set of cover reflections of $\w$ is denoted by $\Cov(w)$.
\end{itemize}
\end{definition}
Similarly to normal words,
a $\mathrm{T}$-word is an expression in the free monoid generated by elements in
the set $\mathrm{T}$ of all reflections.
Denote by $l_T(w)$ its \emph{absolute length},
which is the length of the shortest word for $w$ as a product of arbitrary reflections.
So we have the notion of reduced $\mathrm{T}$-words,
which induces the \emph{absolute order} $\leq_{\mathrm{T}}$ on $W$.

The \emph{noncrossing partitions} for $W$,
(w.r.t. a Coxeter element $c$) are elements
between the trivial word $e$ and $c$, with respect to the absolute order.
The \emph{rank} of a noncrossing partition is its absolute length.

\subsection{Hearts and t-structures}
We collect some facts of tilting theory from \cite{KQ11}.
A \emph{(bounded) t-structure} on a triangulated category $\hua{D}$ is
a full subcategory $\hua{P} \subset \hua{D}$
with $\hua{P}[1] \subset \hua{P}$ satisfying the following
\begin{itemize}
\item if one defines
$\hua{P}^{\perp}=\{ G\in\hua{D}: \Hom_{\hua{D}}(F,G)=0,
  \forall F\in\hua{P}  \}$,
then for every object $E\in\hua{D}$, there is
a unique triangle $F \to E \to G\to F[1]$ in $\hua{D}$
with $F\in\hua{P}$ and $G\in\hua{P}^{\perp}$;
\item for every object $M$,
the shifts $M[k]$ are in $\hua{P}$ for $k\gg0$ and in $\hua{P}^{\perp}$ for $k\ll0$,
or equivalently,
\[
  \hua{D}= \displaystyle\bigcup_{i,j \in \ZZ} \hua{P}^\perp[i] \cap \hua{P}[j].
\]
\end{itemize}
It follows immediately that we also have
\[
  \hua{P}=\{ F\in\hua{D}: \Hom_{\hua{D}}(F,G)=0, \forall G\in\hua{P}^\perp  \}.
\]
Note that $\hua{P}^{\perp}[-1]\subset \hua{P}^{\perp}$.

The \emph{heart} of a t-structure $\hua{P}$ is the full subcategory
\[
  \h=  \hua{P}^\perp[1]\cap\hua{P}
\]
and a t-structure is uniquely determined by its heart.
More precisely, any bounded t-structure $\hua{P}$
with heart $\h$ determines a canonical filtration for each $M$ in $\hua{D}$:
\begin{equation}
\label{eq:canonfilt}
\xymatrix@C=0,5pc{
  0=M_0 \ar[rr] && M_1 \ar[dl] \ar[rr] &&  \cdots\ar[rr] && M_{m-1}
        \ar[rr] && M_m=M \ar[dl] \\
  & H_1[k_1] \ar@{-->}[ul]  && && && H_m[k_m] \ar@{-->}[ul]
  }
\end{equation}
where $H_i \in \h$ and $k_1 > \ldots > k_m$ are integers.
Moreover, the $k$-th homology of $M$, with respect to $\h$ is
\begin{gather}\label{eq:homology}
 \Ho{k}(M)=
 \begin{cases}
   H_i & \text{if $k=k_i$} \\
   0 & \text{otherwise.}
 \end{cases}
\end{gather}
Then $\hua{P}$ consists of those objects
with no (nonzero) negative homology,
$\hua{P}^\perp$ those with only negative homology
and $\h$ those with homology only in degree 0.

There is a natural partial order on hearts given by inclusion of their corresponding t-structures.
More precisely, for two hearts $\h_1$ and $\h_2$ in $\hua{D}$,
with t-structures $\hua{P}_1$ and $\hua{P}_2$,
we say
$  \h_1 \leq \h_2$
if and only if $\hua{P}_2\subset\hua{P}_1$.

\subsection{Torsion pairs and tilting}
A similar notion to t-structures in triangulated categories
is torsion pairs in abelian categories.
Tilting with respect to a torsion pair in the heart of a t-structure
provides a way to construct new t-structures.

\begin{definition}
A \emph{torsion pair} in an abelian category $\hua{C}$ is a pair of
full subcategories $\<\hua{F},\hua{T}\>$ of $\hua{C}$,
such that $\Hom(\hua{T},\hua{F})=0$ and furthermore
every object $E \in \hua{C}$ fits into a short exact sequence
$ \xymatrix@C=0.5cm{0 \ar[r] & E^{\hua{T}} \ar[r] & E \ar[r] & E^{\hua{F}} \ar[r] & 0}$
for some objects $E^{\hua{T}} \in \hua{T}$ and $E^{\hua{F}} \in \hua{F}$.
\end{definition}

\begin{proposition} [(Happel, Reiten, Smal\o)]
Let $\h$ be a heart in a triangulated category $\hua{D}$.
Suppose further that $\<\hua{F},\hua{T}\>$ is a torsion pair in $\h$.
Then the full subcategories
\[
    \h^\sharp
    =\{ E \in \hua{D}:\Ho1(E) \in \hua{F}, \Ho0(E) \in \hua{T}
        \mbox{ and } \Ho{i}(E)=0 \mbox{ otherwise} \}
\]
and
\[
    \h^\flat
    =\{ E \in \hua{D}:\Ho{0}(E) \in \hua{F}, \Ho{-1}(E) \in \hua{T}
        \mbox{ and } \Ho{i}(E)=0 \mbox{ otherwise} \}.
\]
are also hearts in $\hua{D}$.
\end{proposition}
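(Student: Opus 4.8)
The plan is to produce a bounded t-structure $\hua{P}^\sharp$ on $\hua{D}$ whose heart is $\h^\sharp$; then $\h^\flat$ is handled for free, since $\h^\flat=\h^\sharp[-1]$ (compare the prescribed homology, using $\Ho{k}(E[-1])=\Ho{k+1}(E)$) and a shift of a t-structure is a t-structure. Reading off the homology of objects of $\h^\sharp$, define
\[
    \hua{P}^\sharp=\{E\in\hua{D}\mid \Ho{k}(E)=0\ \text{for}\ k<0,\ \Ho{0}(E)\in\hua{T}\},
\]
\[
    \hua{Q}=\{E\in\hua{D}\mid \Ho{k}(E)=0\ \text{for}\ k>0,\ \Ho{0}(E)\in\hua{F}\},
\]
the latter being the candidate for $(\hua{P}^\sharp)^\perp$. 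Because $[1]$ raises homology degrees and $0\in\hua{T}$, one has $\hua{P}^\sharp[1]\subseteq\hua{P}^\sharp$ at once; and from the inclusions $\hua{P}[1]\subseteq\hua{P}^\sharp\subseteq\hua{P}$ and $\hua{P}^\perp\subseteq\hua{Q}$ the condition that $M[k]$ be eventually in $\hua{P}^\sharp$, resp.\ in $(\hua{P}^\sharp)^\perp$, reduces to the corresponding property of $\hua{P}$. So only the truncation triangles and the identification of the heart remain.

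I would next prove the orthogonality $\Hom_{\hua{D}}(E,G)=0$ for $E\in\hua{P}^\sharp$, $G\in\hua{Q}$. Applying $\Hom_{\hua{D}}(-,G)$ to the canonical triangle $\tau_{\geq1}E\to E\to\Ho{0}(E)[0]\to\tau_{\geq1}E[1]$ and using $\Hom(\hua{P},\hua{P}^\perp)=0$ (after the relevant shifts, noting $\tau_{\geq1}E\in\hua{P}[1]$ and $G\in\hua{P}^\perp[1]$) collapses everything to $\Hom(E,G)\cong\Hom_{\hua{D}}(\Ho{0}(E)[0],G)$. Applying $\Hom_{\hua{D}}(\Ho{0}(E)[0],-)$ to the triangle $\tau_{\leq-1}G\to G\to\Ho{0}(G)[0]\to\tau_{\leq-1}G[1]$, again with $\Hom(\hua{P},\hua{P}^\perp)=0$, produces an injection $\Hom_{\hua{D}}(\Ho{0}(E)[0],G)\hookrightarrow\Hom_{\h}(\Ho{0}(E),\Ho{0}(G))$, which vanishes since $\Ho{0}(E)\in\hua{T}$, $\Ho{0}(G)\in\hua{F}$ and $\Hom(\hua{T},\hua{F})=0$. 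This shows $\hua{Q}\subseteq(\hua{P}^\sharp)^\perp$, so once the truncation triangles below are built the equality $(\hua{P}^\sharp)^\perp=\hua{Q}$ follows by the usual argument: the map from the $\hua{P}^\sharp$-part of an object into a $(\hua{P}^\sharp)^\perp$-object must vanish, splitting off a summand lying in $\hua{P}^\sharp\cap(\hua{P}^\sharp)^\perp$, which is zero.

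The heart of the matter is the truncation triangle for an arbitrary $E$. Put $X=\Ho{0}(E)$ and let $0\to X^{\hua{T}}\to X\to X^{\hua{F}}\to0$ be the torsion short exact sequence of $X$ in $\h$, with $X^{\hua{T}}\in\hua{T}$ and $X^{\hua{F}}\in\hua{F}$; in $\hua{D}$ it lifts to a triangle $X^{\hua{T}}[0]\to X[0]\xrightarrow{q}X^{\hua{F}}[0]\to X^{\hua{T}}[1]$. Let $E'=\tau_{\geq0}E$ with the canonical maps $p\colon E'\to\Ho{0}(E)[0]=X[0]$ (which is the identity on $\Ho{0}$) and $E'\to E$, and set $F=\Cone(q\circ p)[-1]$, so there is a triangle $F\to E'\xrightarrow{q p}X^{\hua{F}}[0]\to F[1]$. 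Running the homology long exact sequence, and using that $p$ is the identity on $\Ho{0}$ and $q$ is surjective, gives $\Ho{k}(F)=\Ho{k}(E)$ for $k\geq1$, $\Ho{0}(F)=\ker q=X^{\hua{T}}\in\hua{T}$, and $\Ho{k}(F)=0$ for $k<0$; hence $F\in\hua{P}^\sharp$. Applying the octahedral axiom to the composite $F\to E'\to E$, with $\Cone(F\to E')=X^{\hua{F}}[0]$ and $\Cone(E'\to E)=\tau_{\leq-1}E$, produces $G:=\Cone(F\to E)$ together with a triangle $X^{\hua{F}}[0]\to G\to\tau_{\leq-1}E\to X^{\hua{F}}[1]$; its homology long exact sequence gives $\Ho{0}(G)=X^{\hua{F}}\in\hua{F}$, $\Ho{k}(G)=\Ho{k}(E)$ for $k<0$, and $\Ho{k}(G)=0$ for $k>0$, so $G\in\hua{Q}\subseteq(\hua{P}^\sharp)^\perp$. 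The octahedron also yields the required triangle $F\to E\to G\to F[1]$.

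Putting this together, $\hua{P}^\sharp$ is a bounded t-structure with $(\hua{P}^\sharp)^\perp=\hua{Q}$, so its heart $(\hua{P}^\sharp)^\perp[1]\cap\hua{P}^\sharp$ consists precisely of the $E$ with $\Ho{1}(E)\in\hua{F}$, $\Ho{0}(E)\in\hua{T}$ and no other homology, i.e.\ $\h^\sharp$; and $\h^\flat=\h^\sharp[-1]$ is a heart as well. I expect the only real difficulty to be the octahedral bookkeeping in the third step: one has a single torsion short exact sequence sitting in degree $0$, and must arrange the directions of the canonical truncation maps so that the torsion sub $X^{\hua{T}}$ is absorbed into $F$ (which also keeps all strictly positive homology) while the torsion-free quotient $X^{\hua{F}}$ goes into $G$ (which also keeps all strictly negative homology). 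Once the maps are set up correctly, the rest is a direct chase through homology long exact sequences.
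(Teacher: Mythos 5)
The paper does not prove this proposition at all: it is quoted as a classical result of Happel--Reiten--Smal\o\ and used as a black box, so there is no in-paper argument to compare against. Your proposal is, in substance, the standard proof of the HRS tilting theorem, and it is correct: you exhibit the t-structure $\hua{P}^\sharp=\{E:\Ho{k}(E)=0 \text{ for } k<0,\ \Ho{0}(E)\in\hua{T}\}$, verify closure under $[1]$ and boundedness by sandwiching $\hua{P}[1]\subseteq\hua{P}^\sharp\subseteq\hua{P}$, prove orthogonality against $\hua{Q}$ by reducing via truncation triangles to $\Hom_{\h}(\hua{T},\hua{F})=0$, and build the truncation triangle $F\to E\to G$ by splicing the torsion short exact sequence of $\Ho{0}(E)$ into the $\hua{P}$-truncation of $E$ via the octahedral axiom; the identification of the heart as $\hua{Q}[1]\cap\hua{P}^\sharp=\h^\sharp$ and the reduction of $\h^\flat$ to $\h^\sharp[-1]$ are both right.

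One cosmetic slip: in the orthogonality step you write the truncation triangle of $G\in\hua{Q}$ as $\tau_{\leq-1}G\to G\to\Ho{0}(G)[0]\to\tau_{\leq-1}G[1]$, but with the paper's convention (where $\hua{P}$ is the non-negative-homology part and sits as the subobject) the canonical triangle is $\Ho{0}(G)[0]\to G\to\tau_{\leq-1}G\to\Ho{0}(G)[1]$. This does not damage the argument: applying $\Hom(\Ho{0}(E)[0],-)$ to the correctly oriented triangle and using $\Hom(\hua{P},\hua{P}^\perp[j])=0$ for $j\leq 0$ yields an isomorphism (not merely an injection) $\Hom(\Ho{0}(E)[0],G)\cong\Hom_{\h}(\Ho{0}(E),\Ho{0}(G))=0$. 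With that orientation fixed, the proof is complete; the uniqueness of the truncation triangle and the identification $(\hua{P}^\sharp)^\perp=\hua{Q}$ follow from the orthogonality exactly as you indicate.
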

We call $\h^\sharp$ the \emph{forward tilt} of $\h$,
with respect to the torsion pair $\<\hua{F},\hua{T}\>$,
and $\h^\flat$ the \emph{backward tilt} of $\h$.
Note that $\h^\flat=\h^\sharp[-1]$.
Furthermore, $\h^\sharp$ has a torsion pair $\<\hua{T},\hua{F}[1]\>$ and we have
\[  \hua{T}=\h\cap\h^\sharp, \quad \hua{F}=\h\cap\h^\sharp[-1].  \]
With respect to this torsion pair,
the forward and backward tilts are
$\bigl(\h^\sharp\bigr)^\sharp=\h[1]$
and $\bigl(\h^\sharp\bigr)^\flat=\h$.
Similarly $\h^\flat$ has a torsion pair $\<\hua{T}[-1],\hua{F}\>$ with
\begin{gather}\label{eq:torsion}
\hua{F}=\h\cap\h^\flat, \quad \hua{T}=\h\cap\h^\flat[1].
\end{gather}
And with respect to this torsion pair,
we have $\bigl(\h^\flat\bigr)^\sharp=\h$, $\bigl(\h^\flat\bigr)^\flat=\h[-1]$.
Recall the basic property of the partial order between a heart and its tilts as follows.

\begin{lemma}[(cf. \cite{KQ11})]
\label{lem:tiltorder}
Let $\h$ be a heart in $\hua{D}(Q)$.
Then $\h<\h[m]$ for $m>0$.
For any forward tilt $\h^\sharp$
and backward tilt $\h^\flat$, we have
\[
    \h[-1] \leq \h^\flat \leq \h \leq \h^\sharp \leq \h[1].
\]
Further, the forward tilts $\h^\sharp$ can be characterized as
precisely the hearts between $\h$ and $\h[1]$.
Similarly the backward tilts $\h^\flat$ are those between $\h[-1]$ and $\h$.
\end{lemma}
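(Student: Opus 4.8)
The plan is to reduce everything to the order characterisation recorded in \eqref{def:ineq} --- $\h_1\le\h_2$ iff $\hua{P}_2\subseteq\hua{P}_1$ iff $\h_2\subseteq\hua{P}_1$ --- together with the elementary fact that $\hua{P}$, $\hua{P}^\perp$ and all their shifts are closed under extensions, which is immediate since each is cut out by a $\Hom$-vanishing condition (recall $\hua{P}=\{F\in\hua{D}:\Hom(F,G)=0\ \forall G\in\hua{P}^\perp\}$). Iterating $\hua{P}[1]\subseteq\hua{P}$ gives $\hua{P}[m]\subseteq\hua{P}$, hence $\h\le\h[m]$, and this is proper for $m>0$ because a nonzero $X\in\h$ lies in $\hua{P}$ but not in $\hua{P}[1]$; so $\h<\h[m]$. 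Since $\h^\flat=\h^\sharp[-1]$ and $E\mapsto E[-1]$ is an automorphism of the poset of hearts (it sends $\hua{P}_i$ to $\hua{P}_i[-1]$), the chain $\h[-1]\le\h^\flat\le\h\le\h^\sharp\le\h[1]$ reduces to $\h\le\h^\sharp\le\h[1]$. The objects of $\h^\sharp$ have $\h$-homology only in degrees $0,1$, so $\h^\sharp\subseteq\hua{P}$ and hence $\h\le\h^\sharp$; for $\h^\sharp\le\h[1]$ I would show $\h[1]\subseteq\hua{P}^\sharp$, using that for $H\in\h$ the short exact sequence $0\to T\to H\to F\to 0$ of its torsion pair shifts to a triangle $T[1]\to H[1]\to F[1]\to T[2]$ with $F[1]\in\hua{F}[1]\subseteq\h^\sharp\subseteq\hua{P}^\sharp$ and $T[1]\in\hua{T}[1]\subseteq\h^\sharp[1]\subseteq\hua{P}^\sharp$ (via the torsion pair $\<\hua{T},\hua{F}[1]\>$ on $\h^\sharp$ recalled above), so $H[1]\in\hua{P}^\sharp$ by extension-closure.

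The substantive part is that forward tilts are \emph{exactly} the hearts $\h'$ with $\h\le\h'\le\h[1]$; one inclusion is the chain above, so let $\h'$ be a heart with $\hua{P}[1]\subseteq\hua{P}'\subseteq\hua{P}$. Put $\hua{T}=\h\cap\h'$ and $\hua{F}=\h\cap\h'[-1]$. From $\hua{P}[1]\subseteq\hua{P}'\subseteq\hua{P}$ one gets $\h\subseteq(\hua{P}')^\perp[1]$ and $\h\subseteq\hua{P}'[-1]$, whence $\hua{T}=\h\cap\hua{P}'$ and $\hua{F}=\h\cap(\hua{P}')^\perp$; in particular $\hua{T}\subseteq\hua{P}'$, $\hua{F}\subseteq(\hua{P}')^\perp$, so $\Hom(\hua{T},\hua{F})=0$. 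To decompose a given $Z\in\h$, take its $\hua{P}'$-truncation triangle $A\to Z\to B\to A[1]$, so $A\in\hua{P}'\subseteq\hua{P}$ and $B\in(\hua{P}')^\perp\subseteq\hua{P}^\perp[1]$. Rotating this triangle exhibits $A$ as an extension of $Z\in\hua{P}^\perp[1]$ by $B[-1]\in\hua{P}^\perp\subseteq\hua{P}^\perp[1]$, and $B$ as an extension of $A[1]\in\hua{P}$ by $Z\in\hua{P}$; extension-closure of $\hua{P}$ and of $\hua{P}^\perp[1]$, together with the memberships already noted, then forces $A,B\in\hua{P}\cap\hua{P}^\perp[1]=\h$. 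Hence $A\in\hua{T}$, $B\in\hua{F}$, and $0\to A\to Z\to B\to 0$ is the desired short exact sequence, so $\<\hua{F},\hua{T}\>$ is a torsion pair in $\h$. It remains to see $\h^\sharp=\h'$, which I would prove by both inclusions directly from $\h^\sharp=\{E:\Ho{1}(E)\in\hua{F},\ \Ho{0}(E)\in\hua{T},\ \Ho{i}(E)=0\text{ for }i\neq 0,1\}$: any $E\in\h'$ has $\h$-homology only in degrees $0,1$, and its $\h$-truncation triangle, combined with extension-closure of $\hua{P}'$ and of $(\hua{P}')^\perp[1]$, gives $\Ho{0}(E)\in\hua{T}$ and then $\Ho{1}(E)\in\hua{F}$, i.e.\ $E\in\h^\sharp$; the reverse inclusion $\h^\sharp\subseteq\h'$ is obtained the same way. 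The backward-tilt statement is then automatic: $\h^\flat=\h^\sharp[-1]$ and $E\mapsto E[-1]$ carries $[\h,\h[1]]$ onto $[\h[-1],\h]$.

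The main obstacle is this last identification. The arguments that $A,B\in\h$ and that $\h^\sharp=\h'$ are not deep, but they require keeping careful track of which shift of $\hua{P}$, $\hua{P}^\perp$, $\hua{P}'$ or $(\hua{P}')^\perp$ each intermediate object sits in; the only structural inputs are $\hua{P}[1]\subseteq\hua{P}'\subseteq\hua{P}$ (equivalently $\hua{P}^\perp\subseteq(\hua{P}')^\perp\subseteq\hua{P}^\perp[1]$), extension-closure, and the t-structure axioms, with everything else following formally from \eqref{def:ineq}.
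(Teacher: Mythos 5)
Your proof is correct. The paper itself offers no argument for this lemma --- it is quoted verbatim from \cite{KQ11} (and is essentially the classical Happel--Reiten--Smal\o{} / Bridgeland statement) --- so there is nothing internal to compare against; your reduction to the order characterisation of \eqref{def:ineq}, the shift-equivariance of the poset of hearts, and the truncation-triangle-plus-extension-closure argument showing that any heart $\h'$ with $\hua{P}[1]\subseteq\hua{P}'\subseteq\hua{P}$ arises as $\h^\sharp$ for the torsion pair $\<\h\cap\h'[-1],\,\h\cap\h'\>$ is exactly the standard route taken in the cited source. The only steps you leave implicit (that a nonzero $X\in\h$ cannot lie in $\hua{P}[1]$ because $\hua{P}\cap\hua{P}^\perp=0$, and the degree bookkeeping in the final identification $\h^\sharp=\h'$) are routine and check out.
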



Recall that an object in an abelian category is \emph{simple}
if it has no proper subobjects, or equivalently
it is not the middle term of any (non-trivial) short exact sequence.
An object $M$ is \emph{rigid} if $\Ext^1(M,M)=0$.

\begin{definition}\label{def:simpletilt}
We say a forward tilt is \emph{simple},
if the corresponding torsion free part is generated by a
single rigid simple object $S$.
We denote the new heart by $\tilt{\h}{\sharp}{S}$.
Similarly, a backward tilt is simple
if the corresponding torsion part is generated by such a simple
and the new heart is denoted by $\tilt{\h}{\flat}{S}$.
\end{definition}

For the standard heart $\nzero$ in $\D(Q)$, an APR tilt,
which reverses all arrows at a sink/source of $Q$,
is an example of a simple forward/backward tilt.
The simple tilting leads to the notion of exchange graphs.

\begin{definition}(\cite{KQ11})\label{def:eg}
The \emph{exchange graph} $\EG\D(Q)$ of a triangulated category $\D$
is the oriented graph, whose vertices are all hearts in $\D$
and whose edges correspond to the simple \emph{\textbf{backward}} tilting between them.
We denote by $\EGp\D(Q)$ the `principal' component of $\EG\D(Q)$,
that is, the connected component containing the heart $\h_Q$.
Furthermore, denote by $\EGq$ the full subgraph of $\EGp\D(Q)$ consisting of
those hearts which are backward tilts of $\h_Q$.
Equivalently, $$\EGq=\{\h\in\EGp(Q)\mid \h_Q[-1]\leq\h\leq\h_Q \}.$$
\end{definition}

We have the following proposition which makes it possible
to tilt at any simple of any heart in $\EGq$.

\begin{proposition}[{\cite[Theorem~5.7]{KQ11}}]
Let $Q$ be an acyclic quiver.
Then every heart in $\EGp\D(Q)$ is finite and rigid
(i.e. has finitely many simples, each of which is rigid).
\end{proposition}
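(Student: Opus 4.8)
The statement has two components: that every heart in $\EGp\D(Q)$ is finite and rigid, and the description of $\EGq$ as an order interval. The plan is to establish the first by an induction along the edges of $\EG\D(Q)$, and then deduce the second from the first together with Lemma~\ref{lem:tiltorder}.

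For the first component, since $\EGp\D(Q)$ is by definition the connected component of $\h_Q$, it suffices to check that the property ``finite and rigid'' holds for $\h_Q$ and is preserved under a simple forward or backward tilt. The base case is immediate: $\Sim\h_Q=\{S_1,\dots,S_n\}$ is finite and $\Ext^1_{\k Q}(S_i,S_i)=0$ because the acyclic quiver $Q$ has no loops. For the inductive step, let $\h$ be a finite rigid heart with $\Sim\h=\{S_1,\dots,S_n\}$ and consider the simple forward tilt $\h'=\tilt{\h}{\sharp}{S_1}$ at the rigid simple $S_1$ (the backward case being symmetric, or obtained from $\h^\flat=\h^\sharp[-1]$). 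I would first describe $\Sim\h'$ explicitly: it consists of $S_1[1]$ together with, for each $j\neq1$, a universal extension $E_j$ of $S_j$ by copies of $S_1$, fitting in a short exact sequence $0\to S_j\to E_j\to S_1^{\oplus d_j}\to0$ in $\h$ with $d_j$ the $\k$-dimension of $\Ext^1_\h(S_1,S_j)$, chosen so that $\Ext^1_\h(S_1,E_j)=0$. Thus $\h'$ again has exactly $n$ simples and is finite. Rigidity of $S_1[1]$ is immediate from that of $S_1$; for each $E_j$ one applies $\Hom_{\D}(-,E_j)$, $\Hom_{\D}(S_1,-)$ and $\Hom_{\D}(S_j,-)$ to the defining short exact sequence and chases the resulting long exact sequences, which have at most three nonzero terms because $\D(Q)$ is hereditary, using the rigidity of $S_1$ and $S_j$ and the vanishing $\Ext^1(S_1,E_j)=0$ to conclude $\Ext^1_{\D}(E_j,E_j)=0$. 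This computation of $\Hom$ and $\Ext$ groups for the new simples is the step I expect to be the main obstacle, since the cancellation forcing $E_j$ rigid relies essentially on the universal property of the extension. Induction then shows every heart in $\EGp\D(Q)$ is finite and rigid; in particular every simple there is rigid, so, by Definition~\ref{def:simpletilt}, one may perform a simple tilt at any simple of any heart in the principal component.

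For the second component, the key input is Lemma~\ref{lem:tiltorder}, which says the backward tilts of a heart $\h$ are precisely the hearts between $\h[-1]$ and $\h$. Applied to $\h_Q$ this identifies the backward tilts of $\h_Q$ with exactly the hearts $\h$ satisfying $\h_Q[-1]\leq\h\leq\h_Q$, so $\EGq$, being the full subgraph of $\EGp\D(Q)$ on those hearts, is contained in $\{\h\in\EGp\D(Q)\mid\h_Q[-1]\leq\h\leq\h_Q\}$. For the reverse inclusion I would show that every heart $\h$ in the interval $[\h_Q[-1],\h_Q]$ in fact lies in $\EGp\D(Q)$ and is reached from $\h_Q$ by finitely many simple backward tilts. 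Using that $\mod\k Q$ is a length category, the torsion pair in $\h_Q$ realizing $\h$ as a backward tilt is controlled; one picks a simple of $\h$ lying in $\h_Q$ (which exists whenever $\h\neq\h_Q[-1]$, as otherwise all simples of $\h$ lie in $\h_Q[-1]$ and then $\h=\h_Q[-1]$), tilts backward there — the new heart again lies in $[\h_Q[-1],\h_Q]$, since its simples still lie in $\h_Q\cup\h_Q[-1]$, hence in $\mathcal{P}_{\h_Q}[-1]$ — and iterates; the process terminates because the torsion class cut out of $\h_Q$ strictly decreases at each step. Combining the two components yields the proposition, and in particular justifies tilting at any simple of any heart in $\EGq$.
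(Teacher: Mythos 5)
First, a point of comparison: the paper does not prove this Proposition at all --- it is quoted directly from \cite[Theorem~5.7]{KQ11} --- so your argument can only be measured against that reference, not against anything in this text. Within your argument, the inductive step for rigidity contains a genuine gap. Taking your universal extension $0\to S_j\to E_j\to S_1^{\oplus d_j}\to 0$ with $\Ext^1(S_1,E_j)=0$, the two long exact sequences you propose to chase give, since $\D(Q)$ is hereditary and $S_1,S_j$ are rigid, $\Ext^1(E_j,E_j)\cong\Ext^1(S_j,E_j)\cong\Ext^1(S_j,S_1)^{\oplus d_j}$. Hence rigidity of $E_j$ is \emph{equivalent} to $\Ext^1(S_j,S_1)=0$ whenever $d_j=\dim\Ext^1(S_1,S_j)>0$, i.e.\ to the absence of a degree-one $2$-cycle between $S_1$ and $S_j$ in the Ext-quiver of the heart. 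This does not follow from the inductive hypothesis ``finite, with each simple rigid''; it is an additional property that must itself be carried through the induction (which is precisely why the proof in \cite{KQ11} tracks a stronger invariant --- monochromaticity and rigidity of the whole heart --- rather than rigidity of each simple separately). You correctly identify this computation as the main obstacle, but the obstacle is not overcome by the ingredients you list. A smaller slip: the sequence $0\to S_j\to E_j\to S_1^{\oplus d_j}\to 0$ with $d_j=\dim\Ext^1(S_1,S_j)$ is the formula for the \emph{backward} tilt at $S_1$; for the forward tilt the extension goes the other way, with multiplicity $\dim\Ext^1(S_j,S_1)$.

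The second component is handled with an overreach that introduces a false claim. Since $\EGq$ is by definition the full subgraph of $\EGp\D(Q)$ on the backward tilts of $\h_Q$, and Lemma~\ref{lem:tiltorder} identifies the backward tilts of $\h_Q$ with the hearts in the interval $[\h_Q[-1],\h_Q]$, the displayed equality is essentially immediate: the right-hand side already carries the restriction $\h\in\EGp(Q)$, so no reverse inclusion beyond Lemma~\ref{lem:tiltorder} is needed. What you set out to prove instead --- that \emph{every} heart in the interval lies in $\EGp\D(Q)$ and is reached from $\h_Q$ by finitely many simple backward tilts --- fails outside Dynkin type: for the Kronecker quiver, the torsion class whose indecomposables are the regular and preinjective modules is not finitely generated, and the corresponding backward tilt of $\h_Q$ lies in $[\h_Q[-1],\h_Q]$ but cannot be reached in finitely many steps. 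Your termination argument (``the torsion class strictly decreases at each step'') breaks down exactly there, because a strictly decreasing chain of infinite torsion classes need not terminate.
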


\begin{remark}
Unfortunately, we take a different convention to \cite{KQ11} (backward tilting instead of forward).
Thus an exchange graph in this paper has the opposite orientation
of the exchange graph there.
\end{remark}

\begin{example}\label{ex:pentagon}
Let $Q\colon=(2 \to 1)$ be a quiver of type $A_2$.
A piece of the Auslander-Reiten (AR-)quiver of $\D(Q)$ is:
    \[\xymatrix@C=1pc@R=1pc{
       \cdots\ar[dr]& &   P_2[-1] \ar[dr] &&  S_1 \ar[dr]  && S_2\ar[dr] \\
       &S_1[-1]   \ar[ur]  &&  S_2[-1]    \ar[ur]  && P_2 \ar[ur]&&\cdots}
    \]
Then $\EG_Q$ is as follows:
\[{\xymatrix@R=1.5pc@C=.5pc{
    &&\{S_1[-1], S_2\} \ar[drr]\\
    \{S_1, S_2\} \ar[urr] \ar[ddr] &&&& \{S_1[-1], S_2[-1]\}\\\\
    &\{P_2,S_2[-1]\} \ar[rr] && \{P_2[-1], S_1\} \ar[uur],
}}\]
where we denote a heart by the set of its simples.
\end{example}

\subsection{Simple (backward) tilting sequence}\label{sec:sst}
Let $\s=i_1\ldots i_m$ be a sequence with $i_j\in Q_0$
and we have a sequence of hearts $\h_{\s,j}$ with simples
\[\Sim\h_{\s,j}=\{S_i^{\s,j} \mid i\in Q_0\},\quad 0\leq j \leq m,\]
inductively defined as follows.
\begin{itemize}
\item   $\h_{\s,0}=\h_Q$ with $S_i^{\s,0}=S_i$ for any $i\in Q_0$.
\item   For $0\leq j\leq m-1$, we have
\[\h_{\s,{j+1}}=\tilt{(\h_{\s,j})}{ \flat }{ S^{\s,j}_j }.\]
\end{itemize}
Note that $\Sim\h_{\s,{j+1}}$ is given by the formula \cite[Proposition~5.2~(5.2)]{KQ11}
in terms of $\Sim\h_{\s,{j}}$. Thus each simple in $\Sim\h_{\s,{j+1}}$
inherits a labeling (in $Q_0=\{1,\ldots,n\}$) from the corresponding simple in $\Sim\h_{\s,{j}}$.
Define
\[
    \h_\s=\h_\s(Q)\colon=\h_{\s, m}
\]
with simples $\{S_j^\s\}$
and let $\Path(\s)$ be the path $\Path(\s)=T_m^\s \cdots T_1^\s$ as follows
\[
    \Path(\s)=\colon
        \h_Q=\h_{\s, 0} \xrightarrow{ T_1^\s }  \h_{\s,1} \xrightarrow{ T_2^\s }
        \ldots \xrightarrow{ T_m^\s } \h_{\s,m}=\h_\s,
\]
in $\EGp\D(Q)$, where $T_j^\s=S_{j}^{\s,{j-1}}$ is the $j$-th simple in $\h_{\s_{j-1}}$.
As usual, the support $\supp\Path(\s)$ of $\Path(\s)$ is the set $\{ T_1^\s,\ldots, T_m^\s \}$.


\section{Green mutation}\label{sec:keller}
In this section, we give a categorical interpretation of green mutations.
\subsection{Green quiver mutation}

\begin{definition}([Fomin-Zelevinsky])\label{def:mutation}
Let $R$ be a finite quiver without loops or $2$-cycles.
The \emph{mutation} $\mu_k$ on $R$ at vertex $k$ is a quiver $R'=\mu_k(R)$
obtained from $R$ as follows
\begin{itemize}
\item   adding an arrow $i\to j$ for any pair of arrows $i\to k$ and $k \to j$ in $R$;
\item   reversing all arrows incident with $k$;
\item   deleting as many $2$-cycles as possible.
\end{itemize}
\end{definition}

It is straightforward to see that the mutation is an involution, i.e. $\mu_k^2=id$.
A \emph{mutation sequence} $\s=i_1 \ldots i_m$ on $R$ is a sequence with $i_j\in R_0$
and we define
\[
    R_\s\colon=\mu_\s(R)=\mu_{i_m}( \mu_{i_{m-1}}(\ldots  \mu_{i_1}(R)  \ldots)).
\]
As in \S~\ref{sec:sst}, a (green) mutation sequence $\s$ induces a sequence of
simple (backward) tilting and a heart $\h_\s$.

\begin{definition}
Let $\Qpe$ be the \emph{principal extension} of $Q$,
i.e. the quiver obtained from $Q$ by adding
a new frozen vertex $i'$ and a new arrow $i'\to i$ for each vertex $i\in Q_0$.
Note that we will not mutate a quiver at a frozen vertex
and any mutated quiver of $\Qpe$ also has the frozen vertices $\{i'\}_{i\in Q_0}$.
\end{definition}
\begin{remark}\label{rem:op}
Our orientation of arrows between $i$ and $i'$ in $\Qpe$
is different from Keller's original choice in \cite{Kel11}.
The reason is that Keller considered the representations of $Q^{\operatorname{op}}$
(see \cite[p5]{Kel11})
while we prefer to deal with representations of $Q$.
Hence our green mutation sequences for $Q$ are
the green mutation sequences for $Q^{\operatorname{op}}$ in Keller's setting.
\end{remark}

\begin{definition}[(Keller \cite{Kel11})]\label{def:green}
Let $\s$ be a mutation sequence of $\Qpe$:
$$
    \Qpe=\Qpe_0\to\Qpe_1\to\cdots\to\Qpe_m=\Qpe_\s.
$$
\begin{itemize}
\item A vertex $j$ in the quiver $\Qpe_t$, $1\leq t\leq m$, is called \emph{green} if
there is no arrow from $j$ to any frozen vertex $i'$.
\item A vertex $j$ in the quiver $\Qpe_t$, $1\leq t\leq m$ is called \emph{red} if
there are no arrows to $j$ from any frozen vertex $i'$.
Let $\VR{\s}$ be the set of red vertices in $\Qpe_\s$ for $\s$.
\item $\s$ is a \emph{green mutation sequence} on $Q$ (or $\Qpe$)
if the mutation $\Qpe_{t}\to\Qpe_{t+1}$ is w.r.t. some green vertex in $\Qpe_t$,
for any $0\leq t\leq m-1$.
Such a green mutation sequence $\s$ is \emph{maximal} if $\VR{\s}=Q_0$.
\end{itemize}
\end{definition}

\begin{remark}
The original definition of a vertex being red is if the vertex is not green.
However, we will prove that our definition coincides with the original one
in Theorem~\ref{thm:keller}.
\end{remark}

\subsection{Principal extension of Ext-quivers}

Following \cite{KQ11}, we will use Ext-quivers of hearts to study green mutation.
Recall that a graded quiver is a quiver whose arrows are endowed with a $\ZZ$-grading.

\begin{definition}
\label{def:extquiv}
Let $\h$ be a finite heart in a triangulated category $\D$ with
$\mathbf{S}_{\h}=\bigoplus_{S\in\Sim\h} S$.
The Ext-quiver $\Q{\h}$ is the (positively) graded quiver
whose vertices are the simples of $\h$ and
whose graded edges correspond to a basis of
$\End^\bullet(\mathbf{S}_{\h},\mathbf{S}_{\h})$.
\end{definition}

Further, we define the \emph{CY-3 double} of a graded quiver $\hua{Q}$,
denoted by $\CY{\hua{Q}}{3}$,
to be the quiver obtained from $\hua{Q}$
by adding an arrow $T\to S$ of degree $3-k$ for each arrow $S\to T$ of degree $k$
and adding a loop of degree 3 at each vertex.
See Table~\ref{quivers} for an example of Ext-quivers and CY-3 doubling.

For the principal extension $\Qpe$ of a quiver $Q$,
consider its module category $\h_{\Qpe}$ and derived category $\D(\Qpe)$.
Since $Q$ is a subquiver of its extension $\Qpe$,
$\h_Q$ and $\D(Q)$ are subcategories of $\h_{\Qpe}$ and $\D(\Qpe)$ respectively.
For a sequence $\s$, it also induces a simple tilting sequence in $\D(\Qpe)$,
starting at $\h_{\Qpe}$ and ending at some heart, denoted by $\widetilde{\h_\s}$.

Let the set of simples in $\Sim\h_{\Qpe}-\Sim\h_Q$ be
\[\Sim\h_{Q'}\colon=\{S_i'\mid i\in Q_0\}.\]
A straightforward calculation gives
\[
    \Hom^\bullet(S_i',S_j)=\delta_{ij}\k[-1], \quad \forall i,j\in Q_0.
\]
Hence, for any $M\in\h_Q$, we have
\begin{gather}\label{eq:ijk1}
    \Hom^k(\bigoplus_{i\in Q_0} S_i',M)\neq0\quad \iff\quad k=1.
\end{gather}

\begin{lemma}\label{lem:bubian}
For any backward tilting sequence $\s$,
$\Sim\widetilde{\h_\s}=\Sim\h_{\s}\cup\Sim\h_{Q'}$.
In particular, $\Q{\h_\s}$ is a subquiver of $\Q{\widetilde{\h_\s}}$.
\end{lemma}
\begin{proof}
Use induction on the length of $\s$ starting from the trivial case when $\s=\emptyset$.
Suppose that $\s=\t j$ with $\Sim\widetilde{\h_\t}=\Sim\h_{\t}\cup\Sim\h_{Q'}$.
By \cite[Lemma~3.4]{KQ11}, we have $\h_\t\leq\h_Q$ and hence
the homology of any object in $\h_t$, with respect to $\h_Q$, lives in non-positive degrees.
Thus, any $M\in\h_\t$ admits a filtration with factors $S_i[k], i\in Q_0, k\leq0$.
As $r'$ is a source in $\Qpe$ for any $s\in Q_0$,
$S_r'$ is an injective object in $\h_{\Qpe}$ which implies that
$\Ext^1(S_i[k],S_r')=0$ for any $i\in Q_0$ and $k\leq0$.
Therefore, we have $\Ext^1(M, S_r')=0$ for any $M\in\h_\t$,
in particular, for $M=S_j^\t$.
Then applying \cite[formula~(5.2)]{KQ11} to the backward tilts
$\tilt{\h_\t}{\flat}{S_j^\t}$ and $\tilt{(\widetilde{\h_\t})}{\flat}{S_j^\t}$,
gives the required formula (since $S_r'$ never changes).
\end{proof}

\begin{definition}
\label{def:p.e.}
Given a sequence $\s$, define the \emph{principal extension} of the Ext-quiver $\Q{\h_\s}$
to be the Ext-quiver $\Q{\widetilde{\h_\s}}$ while the vertices in $\Sim\h_{Q'}$ are
the frozen vertices.
\end{definition}

From the proof of Lemma~\ref{lem:bubian}, it is straightforward to see the following.
\begin{lemma}\label{lem:source}
Every frozen vertex  $S_i'$ is a source in $\Q{\widetilde{\h_\s}}$.
\end{lemma}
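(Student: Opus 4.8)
The final statement to prove is Lemma~\ref{lem:source}: every frozen vertex $S_i'$ is a source in $\Q{\widetilde{\h_\s}}$.

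The plan is to read off the claim directly from the Ext-quiver description together with the structure established in the proof of Lemma~\ref{lem:bubian}. Recall that the vertices of $\Q{\widetilde{\h_\s}}$ are the simples of $\widetilde{\h_\s}$, which by Lemma~\ref{lem:bubian} are exactly $\Sim\h_\s \cup \Sim\h_{Q'}$, and the (graded) edges correspond to a basis of $\End^\bullet(\mathbf{S},\mathbf{S})$ where $\mathbf{S}$ is the sum of these simples. So to show $S_i'$ is a source, I need to show that there are no edges $X \to S_i'$ with $X$ any simple of $\widetilde{\h_\s}$, i.e. that $\Hom^k_{\D(\Qpe)}(X, S_i') = 0$ for all $k \geq 1$ (degree-zero maps between non-isomorphic simples vanish automatically, and we only care about positive-degree edges in a positively graded quiver, and the loop contributions at $S_i'$ itself are not relevant to the source property).

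First I would split the simples of $\widetilde{\h_\s}$ into the two families given by Lemma~\ref{lem:bubian}. For $X = S_j' \in \Sim\h_{Q'}$ with $j \neq i$: these are the simple $\k\Qpe$-modules at the new frozen vertices, and since each $i'$ is a source of $\Qpe$, the $S_i'$ are injective objects of $\h_{\Qpe}$; an injective has no higher self-extensions among modules, and $\Hom(S_j', S_i') = 0$ for $j\neq i$, so no edges arise. For $X = S^\s_j \in \Sim\h_\s$: this is the crux. The point is that, as in the proof of Lemma~\ref{lem:bubian}, $\h_\s \leq \h_Q$ (by \cite[Lemma~3.4]{KQ11} or the iterated backward-tilt description, since each $\h_{\s,j+1}$ is a backward tilt of $\h_{\s,j}$ and backward tilts only decrease the heart, with $\h_{\s,0}=\h_Q$), so every $S^\s_j$ has homology with respect to $\h_Q$ concentrated in non-positive degrees, hence admits a filtration with factors $S_\ell[k]$, $\ell \in Q_0$, $k \leq 0$. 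Since $S_i'$ is injective in $\h_{\Qpe}$, we get $\Ext^m_{\h_{\Qpe}}(S_\ell[k], S_i') = 0$ for all $m$, equivalently $\Hom^p_{\D(\Qpe)}(S_\ell, S_i') = 0$ for $p \geq 1+k$; combined over the filtration, $\Hom^p_{\D(\Qpe)}(S^\s_j, S_i') = 0$ for all $p \geq 1$. Therefore no edge $S^\s_j \to S_i'$ exists either. Hence $S_i'$ has no incoming arrows other than possibly its degree-$3$ loop (which is irrelevant), so it is a source.

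I do not expect a serious obstacle here — the lemma is essentially a corollary of the proof of Lemma~\ref{lem:bubian}, as the text itself signals ("From the proof of Lemma~\ref{lem:bubian}, it is straightforward to see the following"). The only point requiring a little care is to make sure the Hom-vanishing is phrased in the correct direction (we need $\Hom^{\geq 1}$ out of arbitrary simples into $S_i'$, which is the injectivity side, not the projectivity side), and to note that the degree-zero Hom between distinct simples vanishes by Schur-type reasoning so it does not contribute an edge into $S_i'$ either. I would also remark that this reproves compatibility with Definition~\ref{def:p.e.}, so that the frozen vertices of the principal extension $\Q{\widetilde{\h_\s}}$ behave exactly like the frozen vertices $i'$ of $\Qpe$ in Keller's setup, which is what makes the green/red terminology of Definition~\ref{def:green} transportable to Ext-quivers of hearts.
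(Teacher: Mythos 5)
Your proof is correct and follows exactly the route the paper intends: the lemma is left as a consequence of the proof of Lemma~\ref{lem:bubian}, namely that each $S_i'$ is injective in $\h_{\Qpe}$ while every simple of $\h_\s$ is filtered by $S_\ell[k]$ with $k\leq 0$, so all positive-degree Homs into $S_i'$ vanish. Your additional care about degrees other than one (needed since the simples of $\h_\s$ are complexes, not modules) and about the frozen-to-frozen Homs is exactly the right bookkeeping to make the paper's ``straightforward'' claim precise.
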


\subsection{Green mutation as simple (backward) tilting}

Before we prove Keller's observations for green mutation,
we need the following result concerning the relation between
quivers for clusters and Ext-quivers for hearts.
Because the proof is technical, we leave it to the appendix.

\begin{lemma}\label{lem:KQ}
If $\widetilde{\h_\s}\in\EG_{\Qpe}$ for some sequence $\s$,
then $\Qpe_\s$ is canonically isomorphic to the degree one part of
$\CY{   \Q{\widetilde{\h_\s}}   }{3}$.
\end{lemma}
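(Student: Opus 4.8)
\textbf{Proof plan for Lemma~\ref{lem:KQ}.}

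The plan is to compare the combinatorial mutation recipe for $\Qpe_\s$ with the homological structure of the heart $\widetilde{\h_\s}$ step by step along the sequence $\s$, using induction on the length $m$ of $\s$. The base case $\s=\emptyset$ is the statement that $\Qpe$ is the degree-one part of $\CY{\Q{\h_{\Qpe}}}{3}$, which one checks directly: the Ext-quiver $\Q{\h_{\Qpe}}$ records $\Hom^1$ between simples of $\h_{\Qpe}$, and since $\h_{\Qpe}=\mod\k\Qpe$ is hereditary these $\mathrm{Ext}^1$-groups are dual (via the Euler form) to the arrows of $\Qpe$; the CY-3 doubling adds the reverse arrows in degree $2$ and a degree-$3$ loop, so only the degree-one part survives as $\Qpe$ itself. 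For the inductive step, assume $\widetilde{\h_\t}\in\EG_{\Qpe}$ with $\Qpe_\t$ the degree-one part of $\CY{\Q{\widetilde{\h_\t}}}{3}$, and write $\s=\t j$; the new heart is $\widetilde{\h_\s}=\tilt{(\widetilde{\h_\t})}{\flat}{S_j^\t}$.

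The key step is then to match \emph{quiver mutation at vertex $j$} with \emph{simple backward tilting at $S_j^\t$} at the level of CY-3 doubled Ext-quivers. On the cluster side, $\Qpe_\s=\mu_j(\Qpe_\t)$ is computed by Definition~\ref{def:mutation}: reverse the arrows at $j$, add compositions through $j$, cancel $2$-cycles. On the tilting side, the formula \cite[Proposition~5.2~(5.2)]{KQ11} for $\Sim\widetilde{\h_\s}$ in terms of $\Sim\widetilde{\h_\t}$ tells us how each simple changes under the simple backward tilt — $S_j^\t$ is replaced by $S_j^\t[1]$ and the other simples $S_i^\t$ are replaced by extensions involving $\Hom^1(S_i^\t,S_j^\t)$-many copies of $S_j^\t$ — and from this one reads off the new graded $\Hom^\bullet$ spaces, hence the new Ext-quiver $\Q{\widetilde{\h_\s}}$. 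Taking the CY-3 double and restricting to degree one, I would verify that the three mutation operations appear exactly: the reversal of arrows at $j$ comes from the shift $S_j^\t\mapsto S_j^\t[1]$ swapping $\Hom^1$ and $\Hom^2$ (equivalently $\Hom^{3-1}$) at $j$; the added paths $i\to j'$ through $j$ come from composing a class in $\Hom^1(S_i^\t,S_j^\t)$ with one in $\Hom^1(S_j^\t,S_{j'}^\t)$ and tracking how the Koszul-type extension changes $\Hom^1(S_i^\s,S_{j'}^\s)$; and the $2$-cycle cancellation corresponds to cancellation in these $\Hom$-spaces coming from the relations in the $A_\infty$-structure on the CY-3 doubled quiver, i.e. to the fact that in a $3$-Calabi--Yau context an arrow and its reverse of complementary degree do not both contribute to the degree-one part after accounting for potential terms. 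Throughout, Lemma~\ref{lem:source} guarantees the frozen vertices $S_i'$ stay sources, so they are never affected by the mutation and only receive arrows, which is consistent with \eqref{eq:ijk1} and the definition of green/red.

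The main obstacle I expect is controlling the $2$-cycle cancellation and the added-arrow count simultaneously: one must show that the \emph{net} number of degree-one arrows $i\to i'$ in $\CY{\Q{\widetilde{\h_\s}}}{3}$ after a tilt equals the number of arrows produced by $\mu_j$ after cancelling $2$-cycles, and this requires knowing that the relevant $\Hom^1$ and $\Hom^2$ spaces between the new simples are computed \emph{without} unexpected higher extensions — i.e. that the hearts in question remain rigid and finite, which is exactly what is supplied by \cite[Theorem~5.7]{KQ11} and the hypothesis $\widetilde{\h_\s}\in\EG_{\Qpe}$. Because this bookkeeping is delicate and case-heavy (one must separately handle arrows among the mutable vertices, arrows between mutable and frozen vertices, and the behaviour of the degree-$3$ loops), the paper defers it to the appendix; my plan would likewise isolate the single-step comparison as the technical heart and present the induction as above.
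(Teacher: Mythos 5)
Your overall strategy --- induction on the length of $\s$, matching one simple backward tilt with one quiver mutation at the level of the degree-one part of $\CY{\Q{\widetilde{\h_\s}}}{3}$ --- is a legitimate route in principle, but it is not the route the paper takes, and as written it has a real gap exactly at the step you yourself single out as the ``technical heart''. The paper performs no single-step comparison at all: it passes to the cluster category $\hua{C}(\Qpe)$, uses the isomorphism between the exchange graph of hearts and the cluster exchange graph (\cite[Corollary~5.12]{KQ11}) to attach to $\widetilde{\h_\s}$ a cluster tilting object $\widetilde{\mathbf{P}}_\s$, invokes \cite[Theorem~8.7]{KQ11} to identify $\CY{\Q{\widetilde{\h_\s}}}{3}$ with the augmented quiver $Q^+(\widetilde{\mathbf{P}}_\s)$, whose degree-one part is by construction the Gabriel quiver of $\widetilde{\mathbf{P}}_\s$, and finally uses the known fact that mutation of cluster tilting objects induces Fomin--Zelevinsky mutation on Gabriel quivers. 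All of the bookkeeping you describe is thereby outsourced to results already established in \cite{KQ11} and \cite{BT09}.

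The gap in your version is twofold. First, the formula \cite[Proposition~5.2~(5.2)]{KQ11} tells you what the new simples are as objects (a shift and certain extensions), but it does not by itself give the new graded spaces $\Hom^\bullet(S_i^\s,S_{i'}^\s)$; extracting those requires chasing long exact sequences and controlling which connecting maps vanish, and that is precisely the content of the relevant section of \cite{KQ11} that you would be re-deriving rather than citing. Second, your justification of the two-cycle cancellation via ``relations in the $A_\infty$-structure on the CY-3 doubled quiver'' and ``potential terms'' does not apply here: $\D(\Qpe)$ is hereditary, and $\CY{\Q{\widetilde{\h_\s}}}{3}$ is a purely formal combinatorial doubling of the Ext-quiver (a reverse arrow of degree $3-k$ for each arrow of degree $k$, plus degree-$3$ loops), not the Ext-quiver of an actual Calabi--Yau category, so there is no potential or $A_\infty$-relation to appeal to. What actually prevents two-cycles in the degree-one part is a structural property of hearts in $\EG_{\Qpe}$ (one cannot have $\Hom^1$ and $\Hom^2$ between two simples in conflicting directions simultaneously), and that is again supplied by \cite{KQ11}, not by a Calabi--Yau argument. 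To repair your proof, either carry out the single-step comparison honestly --- which amounts to reproving \cite[Theorem~8.7]{KQ11} --- or, better, reduce to the cluster-tilting picture as the paper does, where the compatibility of mutation with Gabriel quivers is already known.
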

\begin{proof}
See Appendix~\ref{app}.
\end{proof}

Now we proceed to prove our first theorem.

\begin{theorem}\label{thm:keller}
Let $Q$ be an acyclic quiver and $\s$ be a green mutation sequence for $Q$.
Then we have the following.
\numbers
\item   $\h_Q[-1]\leq\h_\s\leq\h_Q$. In particular,
$\h_\s$ is the backward tilt of $\h_Q$ w.r.t. some torsion pair, denoted by $(\hua{T}_\s,\hua{F}_s)$.
\item   $\Qpe_\s$ is canonically isomorphic to the degree one part of
$\CY{   \Q{\widetilde{\h_\s}}   }{3}$.
\item   A vertex $j$ in $Q_\s$ is green (resp. red) if and only if
the corresponding simple $S_j^{\s}$ in $\h_\s$ is in $\h_Q$ (resp. $\h_Q[-1]$).
\ends
\end{theorem}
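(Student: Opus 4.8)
The plan is to prove the three assertions of Theorem~\ref{thm:keller} by simultaneous induction on the length $m$ of the green mutation sequence $\s$, leveraging the interpretation of green mutation via the principal extension $\widetilde{\h_\s}$ and Lemma~\ref{lem:KQ}. The base case $\s=\emptyset$ is immediate: $\h_\emptyset=\h_Q$ sits trivially in $\EG_Q$, the quiver $\widetilde{Q}$ matches the degree-one part of $\CY{\Q{\h_Q}}{3}$ by inspection, and every vertex of $\widetilde{Q}$ is green (there is only an arrow $i'\to i$, none from $i$ to $i'$), consistent with $S_i\in\h_Q$.

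First I would set up the inductive step. Write $\s=\t j$, assume the three statements hold for $\t$, and in particular $\h_\t\in\EG_Q$, so by the cited Proposition we may tilt at the $j$-th simple $S_j^\t$. The crucial point is that $\s$ being a green mutation sequence means precisely (via statement 3 for $\t$) that the $j$-th vertex of $\widetilde{Q}_\t$ is green, i.e. $S_j^\t\in\h_Q$ (not in $\h_Q[-1]$). I would then invoke Lemma~\ref{lem:bubian} to write $\Sim\widetilde{\h_\t}=\Sim\h_\t\cup\Sim\h_{Q'}$ and use Lemma~\ref{lem:source} so that $\widetilde{\h_\t}$ stays in $\EG_{\widetilde Q}$ and Lemma~\ref{lem:KQ} applies to both $\t$ and $\s$; this gives statement 2 for $\s$ as soon as we know $\widetilde{\h_\s}\in\EG_{\widetilde Q}$, which follows from the backward tilt being simple.

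Next I would prove statement 1 for $\s$. Since $S_j^\t\in\h_Q$ and $\h_\t$ lies between $\h_Q[-1]$ and $\h_Q$, after the simple backward tilt at $S_j^\t$ the new simple $S_j^\s=S_j^\t[-1]$ lands in $\h_Q[-1]$, while the other simples of $\h_\s$, being iterated extensions controlled by formula~(5.2) of~\cite{KQ11}, remain expressible via objects of $\h_Q\sqcup\h_Q[-1]$; hence $\h_\s$ still satisfies $\h_Q[-1]\leq\h_\s\leq\h_Q$ by Lemma~\ref{lem:tiltorder} applied to the chain $\h_Q[-1]\leq(\h_\t)^\flat_{S_j^\t}\leq\h_\t\leq\h_Q$. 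For statement 3, I would argue that each simple $S_i^\s$ of $\h_\s$ is either in $\h_Q$ or in $\h_Q[-1]$ — a consequence of statement 1 together with the fact that hearts in $\EG_Q$ have their simples among the indecomposables of $\h_Q\sqcup\h_Q[-1]$ (this is where I would need the structure theory of $\EG_Q$ for acyclic $Q$, i.e. the cited \cite[Theorem~5.7]{KQ11}). Then the color of vertex $i$ in $\widetilde{Q}_\s$ is detected by the arrows between $S_i^\s$ and the frozen $S_i'$; using \eqref{eq:ijk1}, which says $\Hom^\bullet(\bigoplus S_i', M)$ is concentrated in degree $1$ for $M\in\h_Q$, together with the CY-3 doubling and Lemma~\ref{lem:KQ}, one computes that an arrow from $i$ to a frozen vertex exists exactly when $S_i^\s\in\h_Q[-1]$, and an arrow from a frozen vertex to $i$ exists exactly when $S_i^\s\in\h_Q$; since these are mutually exclusive and exhaustive, $i$ is either green or red as claimed.

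**The main obstacle** I anticipate is the bookkeeping in statement 3: one must carefully track, through the tilting formula~(5.2) of~\cite{KQ11} and the CY-3 double, precisely which graded arrows appear between the mobile vertices and the frozen vertices $S_i'$, and verify that the degree-one part isolates exactly the green/red dichotomy. This requires knowing that no simple $S_i^\s$ can be ``mixed'' (partly in $\h_Q$, partly in $\h_Q[-1]$), which is exactly the content of finiteness and rigidity of hearts in $\EGp\D(Q)$ for acyclic $Q$ — so the real work is reducing statement 3 to that structural input rather than re-deriving it. A secondary subtlety is confirming that $\widetilde{\h_\s}$ remains in $\EG_{\widetilde Q}$ at each step so that Lemma~\ref{lem:KQ} stays applicable; this should follow from the tilt at $S_j^\t$ being simple in $\widetilde{\h_\t}$ as well, which is guaranteed by Lemma~\ref{lem:bubian} and Lemma~\ref{lem:source}, but it needs to be stated explicitly in the induction.
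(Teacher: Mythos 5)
Your proposal is correct and follows essentially the same route as the paper: induction on the length of $\s$, using Lemma~\ref{lem:bubian} and Lemma~\ref{lem:KQ} for $2^\circ$, and the degree computation from \eqref{eq:ijk1} combined with the CY-3 double for the green/red dichotomy in $3^\circ$. The only cosmetic difference is that for the lower bound $\h_Q[-1]\leq\h_\s$ in $1^\circ$ the paper cites \cite[Lemma~5.4, $1^\circ$]{KQ11} directly (tilting at a simple lying in $\h_Q$ preserves the bound), and for the dichotomy of simples it uses heredity of $\h_Q$ rather than the structure theorem for $\EGp\D(Q)$; both substitutions are harmless.
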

\begin{proof}
We use induction on the length of $\s$ starting with the trivial case when $l(\s)=0$.
Now suppose that the theorem holds for any green mutation sequence of length less than $m$
and consider the case when $l(\s)=m$.
Let $\s=\t j$ where $l(\t)=m-1$ and $j$ is a green vertex in $\Qpe_{\t}$.

First, the simple $S_j^\t$ corresponding to $j$ is in $\h_Q$,
by $3^\circ$ of the induction step.
This implies $1^\circ$ by \cite[Lemma~5.4, $1^\circ$]{KQ11}.

Second, as $\Sim\widetilde{\h_\s}=\Sim\h_{\s}\cup\Sim\h_{Q'}$, then
$\h_\s\in\EG_Q\,\Longleftrightarrow\,\widetilde{\h_\s}\in\EG_{\Qpe}$ follows from Lemma~\ref{lem:bubian}.
Then $2^\circ$ follows from Lemma~\ref{lem:KQ}.

Third, since $\h_Q$ is hereditary, $1^\circ$ implies
that any simple $S_j^\s\in\Sim\h_\s$ is in either $\h_Q$ or $\h_Q[-1]$.
If $S^\s_j$ is in $\h_Q$, by \eqref{eq:ijk1},
there are arrows $S_i'\to S^\s_j$ in $\Q{\widetilde{\h_\s}}$,
each of which has degree one.
Then, by $2^\circ$ any such degree one arrow
corresponds to an arrow $i'\to j$ in $\Qpe_\s$.
Therefore $j$ is green.
Similarly, if $S^\s_j$ is in $\h_Q[-1]$,
there are arrows $S_i'\to S^\s_j$ in $\Q{\widetilde{\h_\s}}$,
each of which has degree two and corresponds to
an arrow $i'\leftarrow j$ in $\Qpe_\s$.
Then $j$ is red. In all, we have $3^\circ$.
\end{proof}

For a green mutation sequence $\s$ of $Q$,
we will call a simple $S^{\s}_j\in\Sim\h_{\s}$ green/red
if the corresponding vertex $j$ is green/red in $\Qpe_\s$.

The theorem above provides a criterion for a sequence being a green mutation sequence.
Another consequence is that the quiver that corresponds to a maximal green mutation sequence
is determined as follows.

\begin{corollary}\label{cor:keller}
A sequence $\s$ is a green mutation sequence if and only if
$\h\geq\h_Q[-1]$ for any $\h\in\supp\Path(\s)$.
Further, a green mutation sequence $\s$ is maximal if and only if $\h_\s=\h_Q[-1]$.
In particular, $Q_\s\cong Q$ for a maximal green mutation sequence $\s$.
\end{corollary}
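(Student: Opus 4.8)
The statement to prove is Corollary~\ref{cor:keller}, which has three parts: (a) the characterization of green mutation sequences via $\h\geq\h_Q[-1]$ along the whole path; (b) the characterization of maximality via $\h_\s=\h_Q[-1]$; (c) the resulting description of $\Qpe_\s$ for maximal $\s$. The whole corollary is meant to be deduced from Theorem~\ref{thm:keller}, so I would not reprove anything from scratch; the work is in extracting these consequences cleanly.

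For part (a), the forward direction is immediate: if $\s$ is a green mutation sequence, then every prefix $\t$ of $\s$ is also a green mutation sequence, so by $1^\circ$ of Theorem~\ref{thm:keller} we get $\h_Q[-1]\leq\h_\t\leq\h_Q$, and $\supp\Path(\s)$ consists exactly of the hearts $\h_\t$ for prefixes $\t$. For the converse I would argue by induction on the length of $\s$, using the inductive structure $\s=\t j$: assuming $\h\geq\h_Q[-1]$ for all $\h\in\supp\Path(\s)$, the prefix $\t$ satisfies the same hypothesis, so by induction $\t$ is a green mutation sequence, hence Theorem~\ref{thm:keller}$(3^\circ)$ applies to $\t$. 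The point is then that the mutation $\mu_j$ at the $m$-th step is legal (i.e. $j$ is green in $\Qpe_\t$) precisely when $S_j^\t\in\h_Q$, which by $3^\circ$ is equivalent to $S_j^\t\notin\h_Q[-1]$; and I must show $S_j^\t\in\h_Q[-1]$ would force some heart in $\Path(\s)$ to drop strictly below $\h_Q[-1]$. Concretely: if $S_j^\t\in\h_Q[-1]$, then by Lemma~\ref{lem:tiltorder} the backward tilt $\h_\s=\tilt{\h_\t}{\flat}{S_j^\t}$ satisfies $\h_\s<\h_\t$, and since $S_j^\t$ is then a simple of $\h_\t$ lying already in $\h_Q[-1]$, tilting it backward pushes part of the heart below $\h_Q[-1]$ — I would make this precise by comparing t-structures, showing $\h_\s\not\geq\h_Q[-1]$, contradicting the hypothesis on $\supp\Path(\s)$ (note $\h_\s\in\supp\Path(\s)$). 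This step — verifying that a backward tilt at a simple already sitting in $\h_Q[-1]$ genuinely violates $\geq\h_Q[-1]$ — is the main obstacle; it needs the explicit relation between the t-structure of $\tilt{\h_\t}{\flat}{S}$ and that of $\h_\t$, together with $\h_Q[-1]\leq\h_\t$.

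For part (b), by $3^\circ$ of Theorem~\ref{thm:keller} the sequence $\s$ is maximal, i.e. $\VR{\s}=Q_0$, if and only if every simple $S_j^\s$ lies in $\h_Q[-1]$, i.e. $\Sim\h_\s\subset\h_Q[-1]$. Combined with $1^\circ$ ($\h_Q[-1]\leq\h_\s$) this forces $\h_\s=\h_Q[-1]$: a heart all of whose simples lie in $\h_Q[-1]$, and which is $\geq\h_Q[-1]$, must equal $\h_Q[-1]$, since the simples generate the heart and hence determine the t-structure. Conversely if $\h_\s=\h_Q[-1]$ then trivially all its simples are in $\h_Q[-1]$, so all vertices are red and $\s$ is maximal.

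For part (c), suppose $\s$ is maximal, so $\h_\s=\h_Q[-1]$ and every vertex of $\Qpe_\s$ is red. By $2^\circ$ of Theorem~\ref{thm:keller}, $\Qpe_\s$ is the degree-one part of $\CY{\Q{\widetilde{\h_\s}}}{3}$, and $\widetilde{\h_\s}$ has simples $\Sim\h_Q[-1]\cup\Sim\h_{Q'}$ by Lemma~\ref{lem:bubian}. The non-frozen part of this Ext-quiver, built from $\Sim\h_Q[-1]$, is canonically $Q$ with all arrows reversed (since $\Q{\h_Q[-1]}$ is $\Q{\h_Q}$ with degrees shifted, and CY-3 doubling swaps the roles), while by \eqref{eq:ijk1} each arrow $S_i'\to S_j$ now sits in degree two, so in $\Qpe_\s$ it appears reversed as $j\to i'$. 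Putting these together: $\Qpe_\s$ is obtained from $\Qpe$ by reversing every arrow incident to a frozen vertex (and the mutated non-frozen part is $Q^{\mathrm{op}}$, explaining the last sentence of Theorem~\ref{thm:0.1}). I would keep this verification brief, citing Lemma~\ref{lem:KQ} and \eqref{eq:ijk1} for the arrow-degree bookkeeping rather than recomputing.
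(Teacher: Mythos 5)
Your treatment of the equivalence and of maximality follows the paper's route. For sufficiency the paper simply cites \cite[Lemma~5.4, $1^\circ$]{KQ11} to get that $\h_\t\geq\h_Q[-1]$ and $\h_\s\geq\h_Q[-1]$ force $S_j^\t\in\h_Q$, whereas you prove the contrapositive inline: if $S_j^\t\in\h_Q[-1]$ then $S_j^\t[-1]$ is a simple of $\h_\s=\tilt{(\h_\t)}{\flat}{S_j^\t}$ lying in $\h_Q[-2]$, so $\h_\s\not\geq\h_Q[-1]$. That is the same content and is a legitimate way to close the step you flag as the main obstacle. Part (b) matches the paper (which asserts the equivalence with less justification than you give).

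There is, however, a concrete error in your part (c). You claim that the non-frozen part of $\Qpe_\s$ is $Q^{\op}$, ``since $\Q{\h_Q[-1]}$ is $\Q{\h_Q}$ with degrees shifted, and CY-3 doubling swaps the roles.'' This is false and contradicts both the statement you are proving (which says only the arrows incident with frozen vertices are reversed) and the $A_2$ example in Figure~\ref{fig:keller}, where the arrow $1\to 2$ survives in the final quiver. The point is that shifting the \emph{whole} heart by $[-1]$ changes nothing: $\Hom^k(S_i[-1],S_j[-1])=\Hom^k(S_i,S_j)$, so $\Q{\h_Q[-1]}=\Q{\h_Q}$ and the degree-one arrows among the non-frozen simples are exactly those of $Q$. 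The degree shift you invoke occurs only where the shift is \emph{relative}, namely for $\Hom^k(S_i',S_j[-1])=\Hom^{k-1}(S_i',S_j)$, which by \eqref{eq:ijk1} is concentrated in degree $2$; the CY-3 double then contributes the degree-one arrow $j\to i'$, reversing exactly the frozen arrows. With that correction your part (c) gives what the paper dismisses with ``this implies the statement immediately.''
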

\begin{proof}
The necessity of the first statement follows from $1^\circ$ of Theorem~\ref{thm:keller}.
For the sufficiency, we only need to show that if $\t$ is a green mutation sequence
and $\s=\t j$ satisfies $\h_\s\geq\h_Q[-1]$, for some $j\in Q_0$,
then $\s$ is also a green mutation sequence.
Since $\h_\t\geq\h_Q[-1]$, by \cite[Lemma~5.4, $1^\circ$]{KQ11}
we know that $\h_\s\geq\h_Q[-1]$ implies $S_j^\t$ is in $\h_Q$.
But this means $j$ is a green vertex for $\t$, by $3^\circ$ of Theorem~\ref{thm:keller},
as required.

For the second statement,
$\s$ is maximal, if and only if $S_i^\s\in\h_Q[-1]$ for any $i\in Q_0$,
or equivalently, $\h_\s=\h_Q[-1]$.
This also implies the last statement immediately.
\end{proof}

\begin{example}\label{ex:keller}
We borrow the example of $A_2$ type green mutation sequences from Keller \cite{Kel11}
(but the orientations differ).
Figure~\ref{fig:keller} gives two different maximal green mutation sequences
($121$ and $21$) which end up being isomorphic to each other.
If we identify the isomorphic ones, we recover the pentagon in Example~\ref{ex:pentagon}.
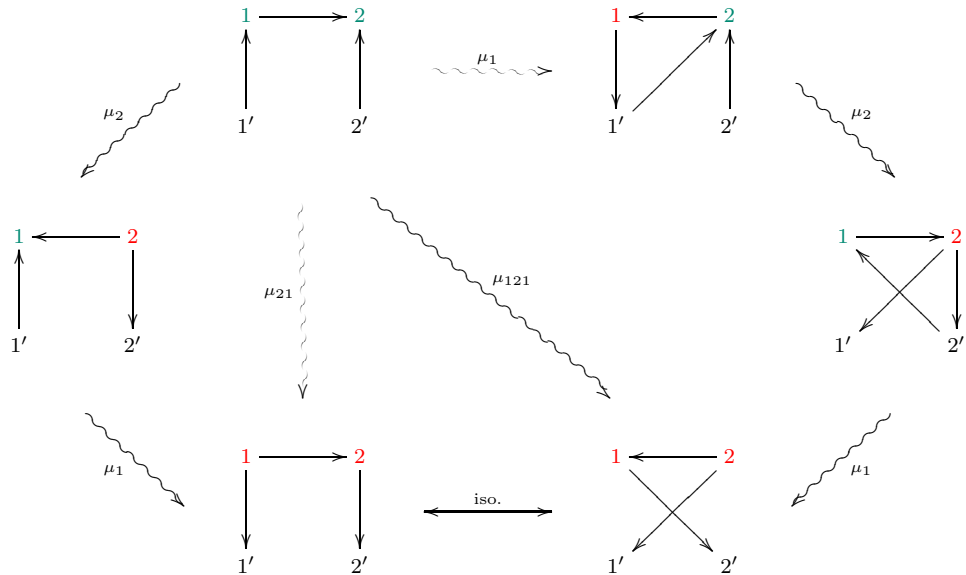
\begin{figure}[h]\centering
\[\xymatrix@C=1pc@R=1pc{
    &&&& \green{1}\ar@{<-}[dd]\ar[rr]&&    \green{2}\ar@{<-}[dd]&&&&&
            \red{1}\ar[dd]&&    \green{2}\ar@{<-}[ddll]\ar[ll]\\
    &&&\ar@{~>}[lldd]_{\mu_2}  &&&& \ar@{~>}[rrr]^{\mu_1}&&&   &&&&\ar@{~>}[rrdd]^{\mu_2}\\
    &&&& 1'  &&    2'    &&&&&   1' &&    2'\ar[uu]\\
    &&&&& \ar@{~>}[dddd]_{\mu_{21}}   &  \ar@{~>}[ddddrrrrr]^{\mu_{121}}    &&&&&&&&&&\\
    \green{1}\ar@{<-}[dd]\ar@{<-}[rr]&&    \red{2}\ar[dd]&&&&&&&&&&&&&
        \green{1}\ar@{<-}[ddrr]&&    \red{2}\ar[ddll]\ar@{<-}[ll]\\
    &&& \\
    1'  &&    2'    &&&&&&&&&&&&&   1' &&    2'\ar@{<-}[uu]\\
    &  \ar@{~>}[ddrr]_{\mu_1}  &&&&&&&&&&&&&&&  \ar@{~>}[ddll]^{\mu_1}\\
    &&&& \red{1}\ar[dd]\ar[rr]&&    \red{2}\ar[dd]&&&&&
        \red{1}&&    \red{2}\ar[ll]\\
    &&&&&&&   \ar@{<->}[rrr]^{\text{iso.}}   &&&&&&&\\
    &&&& 1'  &&    2'    &&&&&   1'\ar@{<-}[uurr] &&    2'\ar@{<-}[uull]
}\]
\caption{Two maximal green mutation sequences for an $A_2$ quiver}
\label{fig:keller}
\end{figure}
\end{example}

\subsection{Wide subcategory via red simples}\label{sec:wide}
In this section, we aim to show that the red simples, up to shift, are precisely the simples in
the wide subcategory $\wide_{\s}$ corresponding to
the torsion class $\hua{T}_{\s}$ in the sense of Ingalls-Thomas.
A \emph{wide subcategory} is an exact abelian subcategory, closed under extensions,
of some abelian category.
Further, given a finite torsion classes $\hua{T}$ in $\h_Q$,
define the corresponding wide subcategory $\wide(\hua{T})$ to be
(cf. \cite[\S~2.3]{IT09})
\begin{gather}\label{eq:defwide}
    \{ M\in\hua{T} \mid \forall (f;X\to M)\in\hua{T}, \ker(f)\in\hua{T} \}.
\end{gather}
First, we give another characterization for $\wide(\hua{T})$.

\begin{proposition}\label{pp:wide}
Let $\<\hua{F},\hua{T}\>$ be a finite torsion pair in $\h_Q$
and $\h^{\sharp}$ be the corresponding forward tilt.
Then we have
\begin{gather}\label{eq:wide}
    \Sim\wide(\hua{T})=\hua{T}\cap\Sim\h^\sharp.
\end{gather}
\end{proposition}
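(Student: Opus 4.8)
The plan is to prove the two inclusions $\hua{T}\cap\Sim\h^\sharp \subseteq \Sim\wide(\hua{T})$ and $\Sim\wide(\hua{T}) \subseteq \hua{T}\cap\Sim\h^\sharp$ separately, using the description of $\wide(\hua{T})$ in \eqref{eq:defwide} together with the torsion pair $\<\hua{T},\hua{F}[1]\>$ on $\h^\sharp$ recorded in the torsion/tilting discussion (so that $\hua{T}=\h_Q\cap\h^\sharp$ and $\hua{F}[1]=\h_Q[1]\cap\h^\sharp$). First I would fix $S\in\hua{T}\cap\Sim\h^\sharp$, i.e. $S$ is a simple object of $\h^\sharp$ lying in the torsion class $\hua{T}$. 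To show $S\in\wide(\hua{T})$, take any morphism $f\colon X\to S$ in $\hua{T}$ and form the short exact sequence $0\to K\to X\to \operatorname{im}(f)\to 0$ in $\h_Q$; since $\operatorname{im}(f)$ is a subobject of $S$ in $\h_Q$, and $S$ is simple \emph{in} $\h^\sharp$, the key point is that a nonzero subobject of $S$ taken inside $\h_Q$ is either $0$, or is all of $S$, or sits in $\hua{F}[1]\cap\h_Q=0$; more precisely I would argue that $\operatorname{im}(f)\in\hua{T}$ (as a quotient of $X\in\hua{T}$) and $\operatorname{im}(f)\hookrightarrow S$ in $\h^\sharp$ forces $\operatorname{im}(f)=0$ or $\operatorname{im}(f)=S$ by simplicity of $S$ in $\h^\sharp$. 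In either case $K=\ker(f)$ is $X$ or the kernel of an epimorphism $X\twoheadrightarrow S$ in $\h_Q$, hence lies in $\hua{T}$ because $\hua{T}$ is closed under quotients and (being a torsion class in a hereditary category) under the relevant subobjects appearing here. This gives $S\in\wide(\hua{T})$, and $S$ is simple in $\wide(\hua{T})$ because any proper subobject in $\wide(\hua{T})$ would be a proper subobject in $\h^\sharp$.

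For the reverse inclusion I would take $S\in\Sim\wide(\hua{T})$ and show $S\in\Sim\h^\sharp$. Since $\wide(\hua{T})\subseteq\hua{T}\subseteq\h^\sharp$, we certainly have $S\in\h^\sharp$, so it remains to see $S$ is simple in $\h^\sharp$. Suppose not: then in $\h^\sharp$ there is a short exact sequence $0\to A\to S\to B\to 0$ with $A,B\neq 0$. Using the torsion pair $\<\hua{T},\hua{F}[1]\>$ on $\h^\sharp$ and $S\in\hua{T}$, I would show $A,B\in\hua{T}$ (the torsion subobject of $S$ is $S$ itself, so $A$, being a subobject of the torsion object $S$, is torsion, and $B$ is a quotient of a torsion object). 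Then $A\hookrightarrow S$ is a morphism in $\hua{T}$, and its image in $\h_Q$ — which I need to relate back to $A$ via $\hua{H}^0$ and the hereditary hypothesis — should be a subobject of $S$ in $\h_Q$ contradicting that every $\hua{T}$-subobject of $S$ (equivalently, kernel of a $\hua{T}$-map into $S$) witnesses $S\in\wide(\hua{T})$ only if $S$ has no proper nonzero such subobjects. Here I would invoke the characterization of $\wide(\hua{T})$ together with the fact that $\wide(\hua{T})$ is closed under subobjects and quotients taken within $\wide(\hua{T})$, so $A\in\wide(\hua{T})$, contradicting simplicity of $S$ in $\wide(\hua{T})$.

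The main obstacle I anticipate is carefully bookkeeping the two different abelian structures: kernels, images and quotients computed in $\h_Q$ versus those computed in $\h^\sharp$ do not a priori agree, and the translation is exactly what the torsion pairs $\<\hua{F},\hua{T}\>$ on $\h_Q$ and $\<\hua{T},\hua{F}[1]\>$ on $\h^\sharp$ are for. The hereditary hypothesis on $\h_Q=\mod\k Q$ is what makes the passage clean: a monomorphism in $\h^\sharp$ between objects of $\hua{T}$ restricts to a monomorphism in $\h_Q$, because the long exact homology sequence has no room for correction terms. I would isolate this compatibility as a short auxiliary claim — \emph{for $M,N\in\hua{T}$, a sequence $0\to M\to N\to P\to 0$ is exact in $\h^\sharp$ if and only if it is exact in $\h_Q$ with $P\in\hua{T}$} — prove it via $\hua{H}^0$ and $\hua{H}^{-1}$, and then both inclusions above become essentially formal. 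The rest is the routine verification that $\wide(\hua{T})$, defined by \eqref{eq:defwide}, is closed under the operations used, which I would only sketch.
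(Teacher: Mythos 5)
Your overall strategy (two inclusions, mediated by the torsion pair $\<\hua{T},\hua{F}[1]\>$ on $\h^\sharp$ and a compatibility statement for exact sequences in the two hearts) is the right one, and is close to the paper's, but both halves rest on closure properties that are false, and the false step in the reverse inclusion hides exactly the case that carries the whole proof. In $\h^\sharp$ the class $\hua{T}$ is the \emph{torsion-free} class of the pair $\<\hua{T},\hua{F}[1]\>$: it is closed under subobjects taken in $\h^\sharp$, but \emph{not} under quotients taken in $\h^\sharp$. So in your reverse inclusion, given $0\to A\to S\to B\to 0$ in $\h^\sharp$ with $S\in\hua{T}$, you do get $A\in\hua{T}$, but your claim that $B\in\hua{T}$ ``as a quotient of a torsion object'' is wrong: $B$ may lie in $\hua{F}[1]$. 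Concretely, for $Q\colon 1\to 2$ with $\hua{T}=\add(S_1\oplus P_1)$ and $\hua{F}=\add(S_2)$, the triangle coming from $0\to S_2\to P_1\to S_1\to 0$ gives the short exact sequence $0\to P_1\to S_1\to S_2[1]\to 0$ in $\h^\sharp$, whose quotient term lies in $\hua{F}[1]$. This omitted case is precisely where the definition \eqref{eq:defwide} of $\wide(\hua{T})$ must be used: when $B\in\hua{F}[1]$, rotating the triangle yields $0\to B[-1]\to A\to S\to 0$ in $\h_Q$, so $A\to S$ is a morphism in $\hua{T}$ whose kernel $B[-1]$ lies in $\hua{F}$, contradicting $S\in\wide(\hua{T})$. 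Your argument never reaches this contradiction because it denies the case can occur; and in the case you do treat ($A,B\in\hua{T}$), the step ``$A\in\wide(\hua{T})$, contradicting simplicity of $S$'' is circular, since $A$ is only known to lie in $\hua{T}$.

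The first inclusion has a milder but genuine gap of the same kind: the kernel (in $\h_Q$) of an epimorphism $X\twoheadrightarrow S$ between objects of $\hua{T}$ need \emph{not} lie in $\hua{T}$ --- torsion classes are not closed under subobjects, hereditary or not. In the example above, $P_1\twoheadrightarrow S_1$ is an epimorphism in $\hua{T}$ with kernel $S_2\in\hua{F}$ (and, consistently, $S_1\notin\wide(\hua{T})$; it is also not simple in $\h^\sharp$). The correct route, which is the paper's, is to use simplicity of $S$ in $\h^\sharp$ first: a nonzero $f\colon X\to S$ is epic \emph{in} $\h^\sharp$, its kernel there is a subobject of $X$ in $\h^\sharp$ and hence lies in the torsion-free class $\hua{T}$, and only then does one transfer back to $\h_Q$ (the triangle $\ker f\to X\to S\to{}$ has all three terms in $\h_Q$, so it is exact in $\h_Q$ as well, identifying the two kernels). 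Your auxiliary compatibility claim is stated in a form that presupposes the kernel already lies in $\hua{T}$, so it cannot substitute for this step. Once both halves are rerouted through $\h^\sharp$ in this way, your outline does assemble into a correct proof, essentially the one in the paper (which organizes the reverse inclusion slightly differently, via a minimal counterexample $M\in\wide(\hua{T})$ and its simple top in $\h^\sharp$).
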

\begin{proof}
By \cite{IT09} and \cite{KQ11},
such a torsion pair corresponds to a cluster tilting object
(in the cluster category of $\D(Q)$) and hence $\h^\sharp$ is finite
(i.e. has finitely many simples, which generate $\h^\sharp$).
As $\h^\sharp$ admits a torsion pair $\<\hua{T},\hua{F}[1]\>$,
any of its simples is either in $\hua{T}$ or $\hua{F}[1]$.
Let $\hua{W}$ be the wide subcategory of $\h^\sharp$
generated by simples in $\hua{T}\cap\Sim\h^\sharp$.

First, for any $S\in\hua{T}\cap\Sim\h^\sharp$ and
\[
    (f:X\to S)\in\hua{T}\subset\h^\sharp,
\]
$f$ is surjective (in $\h^\sharp$) since $S$ is a simple.
Thus $\ker(f)$ is in $\hua{T}$ since $\hua{T}$ is a torsion free class in $\h^\sharp$,
which implies $S\in\wide(\hua{T})$.
Therefore $\hua{W}\subset\wide(\hua{T})$.
We claim that they coincide.

If not, let $M$ in $\wide(\hua{T})-\hua{W}$
whose simple filtration in $\h^\sharp$ (with factors in $\Sim\h^\sharp$)
has a minimal number of factors.
Let $S$ be a simple top of $M$ and then $X=\ker(M\twoheadrightarrow S)$ is in $\hua{T}$.
If $S$ is in $\hua{T}\cap\Sim\h^\sharp$,
then $X$ is in $\wide(\hua{T})-\hua{W}$ with
less simple factors, contradicting the choice of $M$.
Hence $S\in\hua{F}[1]\cap\Sim\h^\sharp$.
Then we obtain a short exact sequence
\[
    0 \to X \hookrightarrow M \twoheadrightarrow S \to 0
\]
in $\h^\sharp$ which becomes a short exact sequence
\[
    0 \to S[-1] \hookrightarrow X  \overset{f}{\twoheadrightarrow} M \to 0
\]
in $\h_Q$. But $\ker(f)=S[-1]\in\hua{F}$,
which contradicts the fact that $M$ is in $\wide(\hua{T})$ (cf. \eqref{eq:defwide}).
Therefore $\wide(\hua{T})=\hua{W}$ or \eqref{eq:wide}.
\end{proof}

We can then deduce a consequence concerning
Bridgeland's stability conditions.

\begin{definition}[Bridgeland]\label{def:stab}
A \emph{stability condition} $\sigma = (Z,\hua{P})$ on $\hua{D}$ consists of
a group homomorphism $Z:K(\hua{D}) \to \kong{C}$ called the \emph{central charge} and
full additive subcategories $\hua{P}(\varphi) \subset \hua{D}$
for each $\varphi \in \kong{R}$, satisfying the following axioms:
\numbers
\item if $0 \neq E \in \hua{P}(\varphi)$
then $Z(E) = m(E) \exp(\varphi  \pi \mathbf{i} )$ for some $m(E) \in \kong{R}_{>0}$,
\item for all
$\varphi \in \kong{R}$, $\hua{P}(\varphi+1)=\hua{P}(\varphi)[1]$,
\item if $\varphi_1>\varphi_2$ and $A_i \in \hua{P}(\varphi_i)$
then $\Hom_{\hua{D}}(A_1,A_2)=0$,
\item for each nonzero object $E \in \hua{D}$ there is a finite sequence of real numbers
$$\varphi_1 > \varphi_2 > ... > \varphi_m$$
and a collection of triangles
$$\xymatrix@C=0.8pc@R=1.4pc{
  0=E_0 \ar[rr] && E_1 \ar[dl] \ar[rr] &&   E_2 \ar[dl] \ar[rr] && ... \
  \ar[rr] && E_{m-1} \ar[rr] && E_m=E \ar[dl] \\
  & A_1 \ar@{-->}[ul]  && A_2 \ar@{-->}[ul] &&  && && A_m \ar@{-->}[ul]
},$$
with $A_j \in \hua{P}(\varphi_j)$ for all $j$.
\ends
\end{definition}

We call the collection of subcategories $\{\hua{P}(\varphi)\}$,
satisfying $2^\circ \sim 4^\circ$ in Definition~\ref{def:stab},
the \emph{slicing}.
Note that $\hua{P}(\varphi)$ is always abelian for any $\varphi\in\kong{R}$ (cf. \cite{B1})
and we call it a \emph{semistable subcategory} w.r.t. $\sigma$.

\begin{corollary}\label{cor:stab}
A finitely generated wide subcategory in $\h_Q$
is a semistable subcategory w.r.t. some Bridgeland stability condition on $\D(Q)$.
\end{corollary}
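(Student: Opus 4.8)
The plan is to deduce Corollary~\ref{cor:stab} from Proposition~\ref{pp:wide} by exhibiting, for a given finite generated wide subcategory $\wide(\hua{T})\subseteq\h_Q$, a stability condition $\sigma=(Z,\hua{P})$ on $\D(Q)$ whose semistable subcategory $\hua{P}(\varphi)$ for a suitable phase $\varphi$ is exactly $\wide(\hua{T})$. First I would invoke Proposition~\ref{pp:wide}: the torsion pair $\<\hua{F},\hua{T}\>$ gives the backward tilt $\h^\sharp$, which lies in $\EG_Q$ and is therefore finite and rigid, with $\Sim\wide(\hua{T})=\hua{T}\cap\Sim\h^\sharp$. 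Since $\h^\sharp$ is a finite heart, it is the heart of a bounded t-structure, and I would use the standard fact (Bridgeland) that any finite heart $\h^\sharp=\hua{P}^\perp[1]\cap\hua{P}$ underlies a stability condition: choose a central charge $Z$ sending each simple $S_i^\sharp\in\Sim\h^\sharp$ into the upper half plane, e.g. $Z(S_i^\sharp)=m_i\exp(\varphi_i\pi\mathbf{i})$ with $m_i>0$ and $\varphi_i\in(0,1)$, and extend $Z$ additively; the associated slicing $\hua{P}$ has $\hua{P}((0,1])=\h^\sharp$.

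The key step is to arrange the phases so that the simples coming from $\hua{T}\cap\Sim\h^\sharp$ all acquire the \emph{same} phase $\varphi$, while the simples from $\hua{F}[1]\cap\Sim\h^\sharp$ are given a strictly different (say strictly larger) phase. Concretely I would set $\varphi_i=\varphi$ for $S_i^\sharp\in\hua{T}$ and $\varphi_i=\psi>\varphi$ for $S_i^\sharp\in\hua{F}[1]$, with $\varphi,\psi\in(0,1)$. Then $\hua{P}(\varphi)$ is the full abelian subcategory of $\h^\sharp$ whose objects are filtered (in $\h^\sharp$) by simples of phase $\varphi$, i.e. by the simples in $\hua{T}\cap\Sim\h^\sharp$; by construction this is precisely the wide subcategory of $\h^\sharp$ generated by $\hua{T}\cap\Sim\h^\sharp$, which Proposition~\ref{pp:wide} identifies with $\wide(\hua{T})$. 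The fact that $\hua{P}(\varphi)$ is abelian is exactly the remark recorded after Definition~\ref{def:stab}.

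The main obstacle I anticipate is verifying the Harder--Narasimhan property (axiom $4^\circ$ of Definition~\ref{def:stab}) for the central charge just defined, and more precisely checking that the resulting semistable subcategory $\hua{P}(\varphi)$ really equals the wide subcategory generated by the phase-$\varphi$ simples rather than something larger or smaller. This is the standard ``finite heart yields a stability condition'' argument, and one must be a little careful that no extensions among objects of phase $\varphi$ and objects of phase $\psi$ destabilize things in the wrong direction; since we are free to choose $\psi$ and $\varphi$ and all simples lie in $(0,1)$, this reduces to a routine check that the HN filtration of any object of $\h^\sharp$ with respect to $Z$ refines its simple filtration, which follows from $\h^\sharp$ being finite and $Z$ being injective on the positive cone generated by $\Sim\h^\sharp$. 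Once $\sigma$ is in hand, the identification $\hua{P}(\varphi)=\wide(\hua{T})$ is immediate from Proposition~\ref{pp:wide}, completing the proof.
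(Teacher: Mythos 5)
Your proposal is correct and follows essentially the same route as the paper: apply Proposition~\ref{pp:wide} to pass to the finite heart $\h^\sharp$, then use Bridgeland's correspondence between stability conditions and stability functions on a finite heart, choosing the central charge so that the simples in $\hua{T}\cap\Sim\h^\sharp$ share one phase and those in $\hua{F}[1]\cap\Sim\h^\sharp$ get another. Your choice of two distinct phases in $(0,1)$ is in fact a slightly cleaner implementation than the paper's literal assignment $Z(S)=0$ on the $\hua{F}[1]$-simples, but the argument is the same.
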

\begin{proof}
Let $\wide(\hua{T})$ be a finitely generated wide subcategory in $\h_Q$
which corresponds to the torsion pair $\<\hua{F},\hua{T}\>$.
Let $\h^{\sharp}$ be the corresponding backward tilt.

Recall that we have the following (\cite{B1}):
\begin{itemize}
\item To give a stability condition on a triangulated category $\hua{D}$
is equivalent to giving a bounded t-structure on $\hua{D}$ and
a stability function on its heart with the HN-property.
\end{itemize}
Thus, a function $Z$ from $\Sim\h^\sharp$ to the upper half plane $\kong{H}$
gives a stability condition $\sigma(Z, \h^\sharp)$ on the triangulated category $\D(Q)$.
Then choosing $Z$ as follows
\[
 Z(S)=
 \begin{cases}
   i & \text{if $S\in\Sim\h^\sharp\cap\hua{T}=\Sim\wide(\hua{T})$} \\
   0 & \text{if $S\in\Sim\h^\sharp\cap\hua{F}[1]$}
 \end{cases}
\]
will make $\wide(\hua{T})$ a semistable subcategory with respect to $\sigma(Z, \h^\sharp)$.
\end{proof}

\begin{remark}
As Bridgeland's stability conditions is a generalized version of King's $\theta$-stability condition,
Corollary~\ref{cor:stab} implies immediately
Ingalls-Thomas' result, that every wide subcategory in $\h_Q$ is a semistable subcategory
w.r.t. some $\theta$-stability condition on $\h_Q$.
\end{remark}

We return to the wide subcategory
associated to a green mutation sequence.
Let $\s$ be a green mutation sequence and
\begin{gather}\label{def:torsion}
    \hua{T}_\s=\h_Q\cap\h_\s[1]
\end{gather}
be the corresponding torsion class in $\h_Q$ in \eqref{eq:torsion}.
We will write $\hua{W}_\s$ for the wide subcategory $\wide(\hua{T}_\s)$ of $\hua{T}_\s$
as in \eqref{eq:defwide}.
Recall that $\VR{\s}$ is the set of red vertices of a green mutation sequence $\s$.
Denote by $\VR{\h_\s}=\{ S_j^\s \mid j\in\VR{\s}\}$ the set of red simples in $\h_{\s}$.

\begin{corollary}
Let $s$ be a green mutation sequence.
Then $\Sim\hua{W}_\s=\VR{\h_\s}[1]$.
\end{corollary}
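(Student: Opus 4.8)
The plan is to combine Proposition~\ref{pp:wide} with the classification of red simples in Theorem~\ref{thm:keller}~$3^\circ$. By Corollary~\ref{cor:keller} (and Theorem~\ref{thm:keller}~$1^\circ$) we know $\h_Q[-1]\leq\h_\s\leq\h_Q$, so $\h_\s$ is a backward tilt of $\h_Q$; equivalently $\h_\s=\h_Q^\flat$ for the torsion pair $\<\hua{F},\hua{T}\>$ in $\h_Q$ given by \eqref{eq:torsion}, namely $\hua{T}=\h_Q\cap\h_\s[1]=\hua{T}_\s$ and $\hua{F}=\h_Q\cap\h_\s$. Here I must be slightly careful with the sharp/flat bookkeeping: Proposition~\ref{pp:wide} is phrased for a torsion pair $\<\hua{F},\hua{T}\>$ with $\h^\sharp$ the ``corresponding backward tilt'' and concludes $\Sim\wide(\hua{T})=\hua{T}\cap\Sim\h^\sharp$. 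Applying it with $\h^\sharp$ replaced by our $\h_\s$ and $\hua{T}=\hua{T}_\s$, we get
\[
    \Sim\hua{W}_\s=\Sim\wide(\hua{T}_\s)=\hua{T}_\s\cap\Sim\h_\s .
\]

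First I would identify the right-hand side. A simple $S_j^\s\in\Sim\h_\s$ lies in either $\h_Q$ or $\h_Q[-1]$ by Theorem~\ref{thm:keller}~$1^\circ$ (and the hereditariness of $\h_Q$, as in the proof of $3^\circ$). If $S_j^\s\in\h_Q$, i.e. $j$ is green, then $S_j^\s\in\hua{F}=\h_Q\cap\h_\s$, so since $\Hom(\hua{T}_\s,\hua{F})=0$ and $S_j^\s\neq0$, it cannot lie in $\hua{T}_\s$. If $S_j^\s\in\h_Q[-1]$, i.e. $j$ is red, then $S_j^\s[1]\in\h_Q$, and by \eqref{eq:torsion} we have $\hua{T}_\s=\h_Q\cap\h_\s[1]$, which contains $S_j^\s[1]$; hence $S_j^\s\in\hua{T}_\s$. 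Therefore $\hua{T}_\s\cap\Sim\h_\s=\VR{\h_\s}[1]$, which is exactly what we want. (Strictly, the elements of $\VR{\h_\s}$ are the red simples, which sit in $\h_Q[-1]$; shifting by $1$ lands them in $\h_Q$, and this shifted set is precisely $\hua{T}_\s\cap\Sim\h_\s$ — note $\Sim\h_\s$ is a set of objects of $\D(Q)$, so the intersection makes sense.)

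The one genuine point to check is that Proposition~\ref{pp:wide} is applicable, i.e. that $\<\hua{F},\hua{T}_\s\>$ is a \emph{finitely generated} torsion pair in $\h_Q$. This follows because $\h_\s\in\EG_Q$ is a finite rigid heart (Proposition, \cite[Theorem~5.7]{KQ11}), so $\hua{T}_\s=\h_Q\cap\h_\s[1]$ is generated by the finitely many objects $S_j^\s[1]$ with $j$ red, and these are rigid; hence it corresponds, via \cite{IT09},\cite{KQ11}, to a cluster tilting object, exactly the hypothesis used in the proof of Proposition~\ref{pp:wide}. I expect the main (very mild) obstacle to be keeping the homological-shift conventions straight — making sure the $[1]$ in $\VR{\h_\s}[1]$ matches the $[1]$ in the definition \eqref{def:torsion} of $\hua{T}_\s$ and in the torsion-pair formula \eqref{eq:torsion} — rather than anything substantive, since all the heavy lifting is already done in Theorem~\ref{thm:keller} and Proposition~\ref{pp:wide}.
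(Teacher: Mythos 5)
Your overall strategy --- combine Theorem~\ref{thm:keller}~$3^\circ$ with Proposition~\ref{pp:wide} --- is exactly the paper's, but there is a genuine shift error in how you invoke Proposition~\ref{pp:wide}, and it makes your central displayed identity false. Despite the word ``backward'' in its statement (a slip you took literally), the heart $\h^\sharp$ in Proposition~\ref{pp:wide} is the one carrying the torsion pair $\<\hua{T},\hua{F}[1]\>$ --- this is what its proof uses --- i.e.\ the \emph{forward} tilt of $\h_Q$ with respect to $\<\hua{F},\hua{T}\>$. In your situation that heart is $\h_\s[1]$, not $\h_\s$: by \eqref{eq:torsion}, the backward tilt determined by $\hua{T}_\s=\h_Q\cap\h_\s[1]$ is $\h_\s$ itself, so the corresponding forward tilt is $\h_\s[1]$. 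The proposition therefore gives
\[
\Sim\hua{W}_\s=\hua{T}_\s\cap\Sim\h_\s[1],
\]
not $\hua{T}_\s\cap\Sim\h_\s$. The latter set is in fact empty: every simple of $\h_\s$ lies either in $\hua{F}=\h_Q\cap\h_\s$ (green) or in $\h_Q[-1]$, hence in $\hua{T}_\s[-1]$ (red), and neither of these meets $\hua{T}_\s\subset\h_Q$ in a nonzero object. Relatedly, your step ``$S_j^\s[1]\in\hua{T}_\s$, hence $S_j^\s\in\hua{T}_\s$'' is a non sequitur ($\hua{T}_\s$ is not closed under $[-1]$), and the concluding identity ``$\hua{T}_\s\cap\Sim\h_\s=\VR{\h_\s}[1]$'' equates an empty set with a nonempty one.

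The repair is immediate and lands you on the paper's proof: apply the proposition to the forward tilt $\h_\s[1]$ to obtain $\Sim\hua{W}_\s=\hua{T}_\s\cap\Sim\h_\s[1]$, and then run your case analysis on the \emph{shifted} simples. If $j$ is red then $S_j^\s\in\h_Q[-1]$, so $S_j^\s[1]\in\h_Q\cap\h_\s[1]=\hua{T}_\s$; if $j$ is green then $S_j^\s[1]\in\h_Q[1]$, which is disjoint from $\hua{T}_\s\subset\h_Q$. Hence $\hua{T}_\s\cap\Sim\h_\s[1]=\VR{\h_\s}[1]$, as required. Your remarks on why $\hua{T}_\s$ is finitely generated, so that Proposition~\ref{pp:wide} applies, are fine.
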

\begin{proof}
By $3^\circ$ of Theorem~\ref{thm:keller}, we have
\[
    \VR{\h_\s}=\h_Q[-1]\cap\Sim\h_\s.
\]
But $\VR{\h_\s}\subset\h_\s$, hence we have
\[
    \VR{\h_\s}=(\h_Q[-1]\cap\h_\s)\cap\Sim\h_\s=\hua{T}_\s[-1]\cap\Sim\h_\s,
\]
where the second equality uses \eqref{eq:torsion}.
Noticing that $\h_\s[1]$ is the forward tilt of $\h_Q$ with respect
to the torsion class $\hua{T}_\s$, we have
\[
    \Sim\hua{W}_\s=\hua{T}_\s\cap\Sim\h_\s[1]
\]
by Proposition~\ref{pp:wide}.
Comparing with the previous equation, we have $\Sim\hua{W}_\s=\VR{\h_\s}[1]$.
\end{proof}

%

\section{C-sortable words}
In this section, we will show that it is natural to interpret a $c$-sortable word
as a green mutation sequence, which produces many interesting consequences.
Let $Q$ be an acyclic quiver with a fixed admissible Coxeter element $c$
w.r.t. the orientation of $Q$.
That means: if $c=c_{1}\cdots c_{n}$,
then there is no arrow from $i_j$ to $i_k$ in $Q$ if $j>k$.
\begin{definition}[(Reading, \cite{R07})]
For a word $a=a_1\ldots a_k$, define the support $\supp(a)$ to be $\{a_1,\ldots,a_k\}$.
Fix a Coxeter element $c=s_{\sigma_1}\ldots s_{\sigma_n}$.
A word $\w$ is called \emph{$c$-sortable} if it can be expressed as a reduced word
$\w=c^{(0)}c^{(1)}\ldots c^{(m)}$, where $c^{(i)}$ are subwords of $c$ satisfying
\begin{gather}\label{eq:c}
    \supp(c^{(m)})\subseteq\supp(c^{(1)})\subseteq\cdots\subseteq\supp(c^{(0)})\subseteq Q_0.
\end{gather}
Note that a $c$-sortable word does not depend on the reduced expression.
\end{definition}
\subsection{Main results}
Denote by $\widetilde{\gm}=i_1\ldots i_k$ the sequence induced from
a $c$-sortable word $\w=s_{i_1}\ldots s_{i_k}$.
Note that $\gm$ induces a path $\Path(\widetilde{\gm})$ and
a heart $\h_{\widetilde{\gm}}$ as in \S~\ref{sec:sst}.
We will drop the tilde of $\widetilde{\w}$ later
when it appears in the subscript or superscript.

\begin{theorem}\label{thm:main}
Let $\w$ be a $c$-sortable word. Then
\numbers
\item   $\widetilde{\gm}$ is a green mutation sequence.
\item   For any $i\in Q_0$, let $s_i^{\gm}$ be the reflection of $S^{\gm}_i$,
the $i$-th simple of $\h_{\gm}$. Then
\begin{gather}
\label{eq:main}
    s_i^{\gm} \cdot \w=\w \cdot  s_i .
\end{gather}
\item   Let the torsion class $\hua{T}_{\gm}$ be defined as in \eqref{def:torsion} and we have
$\Ind\hua{T}_{\gm}=\supp\Path({\gm})$.
\ends
\end{theorem}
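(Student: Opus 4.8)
The plan is to prove the three statements of Theorem~\ref{thm:main} simultaneously by induction on the length $k$ of the $c$-sortable word $\w$, exploiting the recursive structure of $c$-sortable words. The base case $k=0$ is trivial: $\widetilde{\gm}=\emptyset$, $\h_{\gm}=\h_Q$, each $s_i^{\gm}=s_i$, and $\hua{T}_{\gm}=0$. For the inductive step, write $\w = s_{i_1}\w'$ where, using the initial letter $i_1$ of $c$, the word $\w'$ is $c'$-sortable for the rotated Coxeter element $c' = s_{i_1} c s_{i_1}$ (equivalently, $c$ with $s_{i_1}$ moved from the front to the back); admissibility of $c$ with respect to $Q$ translates into admissibility of $c'$ with respect to $\mu_{i_1}(Q)$, since $i_1$ must be a source of $Q$ because $c$ is admissible and $s_{i_1}$ is the leftmost factor. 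The first mutation/tilt is then an APR tilt at the source $i_1$: $\h_{\s,1} = \tilt{\h_Q}{\flat}{S_{i_1}}$ is the standard heart of $\mu_{i_1}(Q)$, and this is a green mutation by Corollary~\ref{cor:keller} since $\h_{\s,1}\geq\h_Q[-1]$.

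First I would verify $2^\circ$ for $\w$, since it controls the combinatorics. By induction applied to $\w'$ over $\mu_{i_1}(Q)$, we get $s_i^{\gm'}\cdot\w' = \w'\cdot s_i$ where $s_i^{\gm'}$ is the reflection of the $i$-th simple of the heart reached from $\h_{\mu_{i_1}(Q)}$ along $\w'$. The key point is that the simples of $\h_{\gm}$ (reached in $\D(Q)$) and those of the corresponding heart reached in $\D(\mu_{i_1}(Q))$ differ only by the change-of-basis at the APR tilt, which on the level of the Coxeter group / Grothendieck group is conjugation by $s_{i_1}$: concretely $\dim S_i^{\gm}$ is related to $\dim S_i^{\gm'}$ by the reflection functor, so $s_i^{\gm} = s_{i_1} s_i^{\gm'} s_{i_1}$ as elements of $W$ (here I would use that $W_Q = W_{\mu_{i_1}(Q)}$ canonically, with simple reflections permuted/conjugated accordingly, and the labelling conventions from Section~\ref{sec:sst}). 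Then $s_i^{\gm}\cdot\w = s_{i_1} s_i^{\gm'} s_{i_1}\cdot s_{i_1}\w' = s_{i_1} s_i^{\gm'}\w' = s_{i_1}\w' s_i = \w s_i$, giving \eqref{eq:main}.

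Next, for $1^\circ$: the first tilt is green as noted, and the remaining tilts form the path $\Path(\widetilde{\gm'})$ but now based at $\h_{\mu_{i_1}(Q)}$ inside $\D(\mu_{i_1}(Q)) = \D(Q)$; by the inductive hypothesis $\widetilde{\gm'}$ is a green mutation sequence for $\mu_{i_1}(Q)$, which by Corollary~\ref{cor:keller} means every heart on $\Path(\widetilde{\gm'})$ is $\geq \h_{\mu_{i_1}(Q)}[-1]$. The one thing to check is that this bound, together with $\h_{\mu_{i_1}(Q)}[-1] \geq \h_Q[-1]$ (which holds because $\h_Q[-1]\leq \h_{\s,1} = \h_{\mu_{i_1}(Q)}$, so shifting by $[-1]$), forces every heart on the full path $\Path(\widetilde{\gm})$ to be $\geq\h_Q[-1]$; hence $\widetilde{\gm}$ is a green mutation sequence by Corollary~\ref{cor:keller} again. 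For $3^\circ$, I would identify $\Ind\hua{T}_{\gm} = \Ind(\h_Q\cap\h_{\gm}[1])$ with $\supp\Path(\gm) = \{T_1^{\gm},\dots,T_k^{\gm}\}$, the set of simples tilted at. The inclusion $\supp\Path(\gm)\subseteq\Ind\hua{T}_{\gm}$: each $T_j^{\gm} = S_{i_j}^{\gm,j-1}$ lies in $\h_Q$ (it is a green simple, being the one we tilt at in a green sequence, by $3^\circ$ of Theorem~\ref{thm:keller}) and lies in $\h_{\gm}[1]$ (since after tilting it downward it becomes $S[-1]\in\h_{\gm,j}[-1]$, and downward tilts only move it further, so $T_j^{\gm}[-1]\in\h_{\gm}[-1]\cap\cdots$, i.e.\ $T_j^{\gm}\in\h_{\gm}[1]$ — this needs the monotonicity of the simples along the path, which I would extract from the filtration/homology description in Section~2.3 together with Lemma~\ref{lem:tiltorder}). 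For the reverse inclusion and for injectivity (distinctness of the $T_j^{\gm}$), I would run the induction: $T_1^{\gm} = S_{i_1}$, and $\hua{T}_{\gm} = \add(S_{i_1} \oplus F_{i_1}^{-1}(\hua{T}_{\gm'}))$ where $F_{i_1}^{-1}$ is the reflection functor embedding $\mod\k\mu_{i_1}(Q)$ appropriately; since $S_{i_1}\notin \mathrm{image}$ of that functor and, by induction, $\Ind\hua{T}_{\gm'} = \supp\Path(\gm')$ has exactly $k-1$ elements none equal to $S_{i_1}$, we get $\#\Ind\hua{T}_{\gm} = k = \#\supp\Path(\gm)$, which combined with the inclusion already shown yields equality and distinctness.

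The main obstacle I anticipate is the bookkeeping at the APR-tilt step: making precise the compatibility between (i) the identification $\D(Q)\simeq\D(\mu_{i_1}(Q))$ via the derived reflection functor, (ii) the relabelling of simples inherited along $\Path(\s)$ as in Section~\ref{sec:sst}, (iii) the conjugation $s_i\mapsto s_{i_1}s_is_{i_1}$ on the Coxeter side, and (iv) the passage from $c$-sortability of $\w$ to $c'$-sortability of $\w'$ (Reading's recursive characterization). Each is standard, but they must be aligned so that "the $i$-th simple of $\h_{\gm}$" is unambiguous and the equation \eqref{eq:main} comes out with the correct indices; once this dictionary is set up, statements $1^\circ$, $2^\circ$, $3^\circ$ all fall out of the inductive hypothesis plus Corollary~\ref{cor:keller} and Theorem~\ref{thm:keller}. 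A secondary subtlety is confirming that $i_1$ is indeed a source of $Q$ (not merely that no arrow goes "backward" among later letters), which uses admissibility of $c$ together with the fact that $s_{i_1}$ is the first letter and $\supp(c^{(0)})$ determines which simple reflections actually appear first — I would spell this out carefully at the start of the induction.
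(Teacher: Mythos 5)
Your overall strategy --- induction via Reading's recursion, peeling off the initial letter, identifying the first tilt with an APR tilt at a source, and transporting the Coxeter-group identity by conjugation with $s_{i_1}$ --- is the same as the paper's. But your argument for $1^\circ$ contains a genuine gap at its central step. You claim $\h_{\mu_{i_1}(Q)}[-1]\geq\h_Q[-1]$, deduced from $\h_Q[-1]\leq\h_{\s,1}=\h_{\mu_{i_1}(Q)}$ ``by shifting''. Shifting that inequality by $[-1]$ only gives $\h_{\mu_{i_1}(Q)}[-1]\geq\h_Q[-2]$; in fact, since $\h_{\mu_{i_1}(Q)}=\tilt{(\h_Q)}{\flat}{S_{i_1}}\leq\h_Q$, the true inequality is the \emph{reverse} one, $\h_{\mu_{i_1}(Q)}[-1]\leq\h_Q[-1]$. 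Consequently the bound you import from the inductive hypothesis (every heart on $\Path(\widetilde{\w'})$ is $\geq\h_{\mu_{i_1}(Q)}[-1]$) is strictly weaker than what Corollary~\ref{cor:keller} requires (every heart $\geq\h_Q[-1]$), and your deduction that $\widetilde{\w}$ is green does not go through. This is exactly the crux the paper has to work for: writing $\w=s_1\u s_j$, the hearts attached to all proper prefixes are $\geq\h_Q[-1]$ by induction (each proper prefix being itself $c$-sortable), so only the final heart $\h_{\w}$ is at risk; since $\Ind\h_{Q_+}[-1]$ and $\Ind\h_Q[-1]$ differ only in $S_1[-2]$ versus $S_1[-1]$, failure of $\h_{\w}\geq\h_Q[-1]$ would force the tilted simple $T=S_j^{\w}$ to equal $S_1[-2]$; feeding this into the identity \eqref{eq:main} for the prefix $s_1\u$ yields $s_1^2\u=\u=\w$, contradicting that the $c$-sortable word $\w$ is reduced. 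Your proposal has no substitute for this argument, and note that it is precisely here that $2^\circ$ is used to prove $1^\circ$ --- the two statements cannot be established independently in the order you suggest.

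Two secondary points. First, your induction is on $l(\w)$ alone and assumes the first letter of $\w$ is the first letter of $c$; a $c$-sortable word need not contain the initial letter of $c$ at all (e.g.\ $\w=s_2s_3$ for $c=s_1s_2s_3$), and the paper handles this by inducting on $l(\w)+\#Q_0$ and restricting to the full subquiver $Q_-$ on $Q_0-\{1\}$ with Coxeter element $c_-$ when $s_1$ is not initial in $\w$. You should add this case. Second, your treatment of $2^\circ$ and $3^\circ$ (conjugation of reflections under the APR tilt, and $\Ind\hua{T}_{\w}=\{S_1\}\cup\Ind\hua{T}_{\v}(Q_+)$) matches the paper and is fine once the bookkeeping you describe is made explicit.
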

\begin{proof}
We use induction on $l(\w)+\#Q_0$ starting with the trivial case $l(\w)=0$.
Suppose that the theorem holds for any $(Q, c, \w)$ with $l(\w)+\#Q_0<m$.
Now we consider the case when $l(\w)+\#Q_0=m$.
Assume that $c=s_1 c_-$ without loss of generality.

If $s_1$ is not the first letter of $\w$,
which is equivalent to $s_1\notin\supp(\w)$ by \eqref{eq:c},
then the theorem reduces to the case for $(Q_-, c_-, \w)$,
where $Q_-$ is the full subquiver with vertex set $Q_0-\{1\}$,
which is true by the inductive assumption.

Next, suppose that $s_1$ is the first letter of $\w$, so $\w=s_1 \v$ for some $\v$.
Denote by $\widetilde{\v}$ the sequence induced by $\v$.
Let $Q_+=\mu_1(Q)$, $c_+=s_1 c s_1$ and
we identify
\[
    \h_{Q_+}=\mod \k Q_+\quad\text{with}\quad\h_{s_1}=\tilt{(\h_Q)}{\flat}{S_1}
\]
via a so-called APR-tilting (reflecting the source $1$ of $Q$).
By \cite[Lemma~2.5]{R07}, $\v$ is $c_+$-sortable and hence
the theorem holds for $(Q_+, c_+, \v)$ by the inductive assumption.
Let $\v=\u s_j$, then the theorem also holds for $(Q, c, s_1 \u)$.
Let $T=S_j^\w$ be the $j$-th simple of $\h_{\w}$
and thus $T[1]$ is the $j$-th simple of $\h_{s_1 \u}$.
Using the criterion in Corollary~\ref{cor:keller} for being a green mutation sequence,
we know that
\begin{equation}
\label{eq:geq}
 \left\{
  \begin{array}{l}
    \tilt{(\h_\w)}{\sharp}{T}=\h_{s_1 \u}\geq\h_Q[-1],\\
    \h_{\w}=\h_{\w}(Q)=\h_{\v}(Q_+)\geq\h_{Q_+}[-1].
  \end{array}
 \right.
\end{equation}
If $\h_{\w}\geq\h_Q[-1]$ fails, then by comparing \eqref{eq:geq} with
\[
    \Ind\h_{Q_+}[-1]=\Ind\h_Q[-1]-\{S_1[-1]\}\cup\{S_1[-2]\},
\]
we must have $T=S_1[-2]$.
However, by formula \eqref{eq:main} for $(Q,c,s_1 \u)$ and $j\in Q_0$, we have
\[
    s_{T[1]} \cdot (s_1 \u)=(s_1 \u) \cdot  s_j.
\]
The RHS is $\w$ while the LHS equals $s_1^2 \u=\u$,
which is a contradiction to the fact that the $c$-sortable expression of $\w$ is reduced.
So $\h_{\w}\geq\h_Q[-1]$ and thus by Corollary~\ref{cor:keller}
$\widetilde{\w}$ is a green mutation sequence as $1^\circ$ required.

For $2^\circ$,
consider the influence of the APR-tilting on the dimension vectors and Coxeter group.
We know that for any $M\in\h_Q-\{S_1\}$,
the $\dim_+M$ with respect to $Q_+$ equals $s_1(\dim M)$.
Thus the reflection $t_M$ of $M$ for $Q_+$ equals $s_1 s_M s_1$
(in terms of reflections for $Q$).
In particular, the reflection $t_i^{\v}$ of $S_i^{\gm}$ for $Q_+$
equals $s_1 s_i^{\w} s_1$.
Then formula \eqref{eq:main} gives
\[
    t_i^{\v} \cdot \v=\v \cdot  s_i\,\quad
\text{or}
    \quad\,s_i^{\gm} \cdot \w=\w \cdot  s_i,
\]
as required.

Finally, we have $\Ind\hua{T}(Q)_{\gm}=\Ind\hua{T}_{\v}(Q_+)\cup\{S_1\}$
which implies $3^\circ$.
\end{proof}

\subsection{Consequences}
In this subsection,
we discuss various corollaries of Theorem~\ref{thm:main}.
First, we prove the bijection between $c$-sortable words
and finite torsion classes in $\h_Q$,
which is essentially equivalent to the result in \cite{AIRT10},
that there is a bijection between $c$-sortable words
and finite torsion-free classed in $\h_Q$.

\begin{corollary}
There is a bijection between the set of $c$-sortable words
and the set of finite torsion classes in $\h_Q$,
sending such a word $\w$ to $\hua{T}_{\gm}$.
\end{corollary}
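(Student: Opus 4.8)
The plan is to establish the bijection by exhibiting an explicit inverse map and checking both compositions are the identity, using the machinery already set up. Theorem~\ref{thm:main} gives a well-defined assignment $\w\mapsto\hua{T}_{\gm}=\h_Q\cap\h_{\gm}[1]$ from $c$-sortable words to torsion classes in $\h_Q$; since $\widetilde{\gm}$ is a green mutation sequence, $\h_Q[-1]\leq\h_{\gm}\leq\h_Q$ by $1^\circ$ of Theorem~\ref{thm:keller}, so $\h_{\gm}$ lies in $\EGq$ and is finite, hence $\hua{T}_{\gm}$ is a finite torsion class. It remains to show this map is both injective and surjective. For injectivity I would show that $\w$ is recovered from $\h_{\gm}$: the heart $\h_{\gm}$ determines the path $\Path(\widetilde{\gm})$ in $\EGq$ (a heart in $\EGq$ reached from $\h_Q$ by simple backward tilts records the minimal such path), and conversely $\Path(\widetilde{\gm})$ determines the sequence $\widetilde{\gm}$ and thus $\w=s_{i_1}\cdots s_{i_k}$; alternatively, formula \eqref{eq:main} shows $\w$ is determined by the product $\prod s_i^{\gm}$ together with the length, since $\w\cdot c\cdot\w^{-1}$-type relations pin down $\w$ among reduced words. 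The cleanest route is to note that distinct $c$-sortable words give distinct torsion classes $\Ind\hua{T}_{\gm}=\supp\Path(\widetilde{\gm})$ by $3^\circ$, and that the support of the path, together with the order structure, reconstructs $\w$.

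For surjectivity, I would argue that every finite torsion class $\hua{T}$ in $\h_Q$ arises. Given such $\hua{T}=\<\hua{F},\hua{T}\>$, the corresponding backward tilt $\h^\sharp$ (in the notation of Proposition~\ref{pp:wide}) lies in $\EGq$ by the cited results of \cite{IT09,KQ11}, so there is a simple backward tilting sequence $\s$ from $\h_Q$ to $\h^\sharp[-1]$, i.e.\ $\h_\s=\h^\sharp[-1]$ with $\hua{T}_\s=\hua{T}$. One then has to show $\s$ can be chosen so that the associated word $s_{i_1}\cdots s_{i_k}$ is $c$-sortable. Here I would induct as in the proof of Theorem~\ref{thm:main}, peeling off the first letter of $c=s_1c_-$: if $S_1\notin\hua{T}$ then $\hua{T}$ restricts to a torsion class in $\mod\k Q_-$ and induction applies; if $S_1\in\hua{T}$, apply the APR-tilt at the source $1$, passing to $(Q_+,c_+)$, and use \cite[Lemma~2.5]{R07} in the reverse direction to lift $c_+$-sortability to $c$-sortability. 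The bookkeeping of $\Ind\hua{T}(Q)_{\gm}=\Ind\hua{T}_{\v}(Q_+)\cup\{S_1\}$ from Theorem~\ref{thm:main} is exactly what makes this induction close.

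The main obstacle I expect is the surjectivity step, specifically ensuring that the green mutation sequence realizing a given finite torsion class can be taken to be $c$-compatible in the precise combinatorial sense of being a $c$-sortable word rather than merely some green sequence. The subtlety is that many simple backward tilting sequences reach the same heart $\h^\sharp[-1]$, and only a special one corresponds to a $c$-sortable word; the inductive peeling of $c$ forces a canonical choice (always tilt at the simple labelled by the current first letter of the remaining Coxeter element whenever possible), and one must verify this greedy procedure terminates at the desired heart and never stalls. This uses admissibility of $c$ crucially — it guarantees the source structure needed for the APR-tilt at each stage — together with the characterization of $\EGq$ from \cite[Theorem~5.7]{KQ11}. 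Once injectivity and surjectivity are in hand, the stated bijection $\w\mapsto\hua{T}_{\gm}$ follows, and I would remark that this recovers the bijection of \cite{Tho,AIRT10} by passing to torsion-free classes via $\hua{F}=\h_Q\cap\h_{\gm}$.
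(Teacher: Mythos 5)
Your surjectivity argument is essentially the paper's: the same induction on $\#\Ind\hua{T}+\#Q_0$, peeling off $c=s_1c_-$, splitting on whether $S_1\in\hua{T}$, restricting to $Q_-$ in one case and APR-tilting to $(Q_+,c_+)$ together with \cite[Lemma~2.5]{R07} in the other, closed by the bookkeeping $\Ind\hua{T}_{\gm}=\Ind\hua{T}_{\v}(Q_+)\cup\{S_1\}$. One small omission there: in the case $S_1\notin\hua{T}$ you must justify that $\hua{T}\subset\h_{Q_-}$ before restricting, which uses that $S_1$ is injective, hence a top factor of any $M\in\h_Q$ not supported on $Q_-$, so $\Hom(M,S_1)\neq0$ while $S_1$ lies in the torsion-free class.

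The genuine gap is in injectivity. Your first route --- that the heart $\h_{\gm}$ determines the path $\Path(\widetilde{\gm})$ --- is false: a heart in $\EGq$ is in general the endpoint of many distinct simple tilting paths from $\h_Q$ (in Example~\ref{ex:pentagon} the heart $\{S_1[-1],S_2[-1]\}$ is reached by both maximal green sequences $121$ and $21$). Your fallback, that ``the support of the path, together with the order structure, reconstructs $\w$,'' is precisely the statement to be proved and is asserted rather than argued. What is needed (and what the paper supplies) is an induction on $l(\w)$ reducing to the case where $\w_1$ and $\w_2$ have distinct initial letters, say $s_1$ occurring before $s_2$ in $c=\cdots s_1\cdots s_2c'$; then the whole tilting sequence $\widetilde{\w}_2$ takes place inside $\D(Q_{\res})$ for the full subquiver $Q_{\res}$ on $\supp(s_2c')$, so $S_1$ lies in $\supp\Path(\widetilde{\w}_1)$ but not in $\supp\Path(\widetilde{\w}_2)$, and $3^\circ$ of Theorem~\ref{thm:main} separates the torsion classes. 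Alternatively, you could make your inverse-map plan honest by proving that for a $c$-sortable word $\w$ one has $S_1\in\hua{T}_{\gm}$ if and only if $s_1$ is the initial of $\w$, so that the greedy algorithm from the surjectivity step applied to $\hua{T}_{\gm}$ returns $\w$; but some such argument must be supplied, and as written none is.
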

\begin{proof}
By $3^\circ$ of Theorem~\ref{thm:main}, every torsion class $\hua{T}_{\gm}$ induced by
a $c$-sortable word $\w$ is finite.
To see two different $c$-sortable words $\w_1$ and $\w_2$ induce different
finite torsion classes, we use induction on $l(\w)$.
Then it is reduced to the case when the first letters of $\w_1$ and $\w_2$
are different.
Without loss of generality, assume that the first letter $s_1$ of $\w_1$
is on the left of the first letter $s_2$ of $\w_2$ in the expression
\[
    c=\cdots s_1 \cdots s_2 c'
\]
of the Coxeter element $c$.
Now, the sequence of simple tilting $\widetilde{\w}_2$ takes place in the full subcategory
\[\D(Q_{\res})\subset\D(Q),\]
where $Q_{\res}$ is the full subquiver of $Q$ restricted to $\supp(s_2 c')$.
Therefore the simple $S_1$ will never appear in the path $\Path(\w_2)$
which implies $\hua{T}_{\widetilde{\w}_1}\neq\hua{T}_{\widetilde{\w}_2}$
by $3^\circ$ of Theorem~\ref{thm:main}.
Therefore, we have an injection from the set of $c$-sortable words
to the set of finite torsion classes in $\h_Q$.

To finish, we need to show the surjectivity, i.e.
any finite torsion class $\hua{T}$ is equal to $\hua{T}_{\gm}$ for some $c$-sortable words.
This is again by induction for $(Q, c, \hua{T})$
on $\#\Ind\hua{T}+\#Q_0$, starting with the trivial case when $\#\Ind\hua{T}=0$.
Suppose that the surjectivity holds for any $(Q, c, \hua{T})$ with
$\#\Ind\hua{T}+\#Q_0<m$ and consider the case when $\#\Ind\hua{T}+\#Q_0=m$.
Assume that $c=s_1 c_-$ without loss of generality.

If the simple injective $S_1$ of $\h_Q$ is not in $\hua{T}$,
we claim that $\hua{T}\subset\h_{Q_-}\subset\h_Q$,
where $Q_-$ is the full subquiver with vertex set $Q_0-\{1\}$.
If so, the theorem reduces to the case for $(Q_-, c_-, \s)$, which holds by
the inductive assumption.
To see the claim, choose any $M\in\h_Q-\h_{Q_-}$.
Then $S_1$ is a simple factor of $M$ in its canonical filtration
and hence the top, since $S_1$ is injective.
Thus $\Hom(M,S)\neq0$.
But $S_1\notin\hua{T}$ implies $S_1$ is in the torsion free class corresponding to $\hua{T}$.
So $M\notin\hua{T}$, which implies $\hua{T}\subset\h_{Q_-}$ as required.

If the simple injective $S_1$ of $\h_Q$ is in $\hua{T}$,
then consider the quiver $Q_+=\mu_1(Q)$ and the torsion class
\[
    \hua{T}_+=\add\left(\Ind\hua{T}-\{S_1\}\right).
\]
Similar to the proof of Theorem~\ref{thm:main},
we know that the claim holds for $(Q_+, c+, \hua{T}_+)$,
where $c_+=s_1 c s_1$.
i.e. $\hua{T}_+=\hua{T}_{\v}$ for some $c_+$-sortable word $\v$.
But $\w=s_1 \v$ is a $c$-sortable word by \cite[Lemma~2.5]{R07} and we have
\[
    \Ind\hua{T}_{\gm}=\{S_1\}\cup\Ind\hua{T}_{\v}=\Ind\hua{T},
\]
or $\hua{T}=\hua{T}_{\gm}$, as required.
\end{proof}

Next, we investigate the length of the path $\Path(\w)$.
\begin{corollary}
Let $\w$ be a $c$-sortable word.
Then $\Path(\gm)\colon\h\rightsquigarrow\h_{\gm}$ has the maximal length
among the directed paths in $\EG_Q$ connecting $\h$ and $\h_{\gm}$.
\end{corollary}
\begin{proof}
By $4^\circ$ of Theorem~\ref{thm:main},
the number of indecomposables in $\hua{T}_{\gm}$ is exactly the length of $\Path(\gm)$.
Then the corollary follows from the fact that
each time we do a backward tilt in the sequence $\widetilde{\gm}$,
the torsion class adds at least a new indecomposable
(the simple where the tilting is at).
\end{proof}

Now, we describe the properties of a $c$-sortable word $\w$
in terms of red vertices of the corresponding green mutation sequence $\widetilde{\gm}$.
Recall that $\VR{\gm}$ is the set of red vertices of
a green mutation sequence $\widetilde{\gm}$
and $\VR{\h_{\gm}}$ the set of (red) simples in $\h_{\gm}$.
Also, see Definition~\ref{def:def} for relative notions.

\begin{corollary}
For a $c$-sortable word $\w$, the set of its
inversions, descents and cover reflections are given as follows
\begin{gather}
\label{eq:Inv}
    \Inv({\w})=\{s_T \mid T\in\supp\Path({\gm})\},\\
\label{eq:Des}
    \Des(\w)=\{s_i\mid i\in \VR{\gm}\}, \\
\label{eq:Cov}
    \Cov(\w)=\{s_T \mid T\in \VR{\h_{\gm}}\},
\end{gather}
where $\s_T$ is the reflection of $T$.
\end{corollary}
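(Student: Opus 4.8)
The plan is to prove the three identities \eqref{eq:Inv}, \eqref{eq:Des}, \eqref{eq:Cov} together by the same induction on $l(\w)+\#Q_0$ already used in Theorem~\ref{thm:main}, so that at each inductive step I may invoke formula \eqref{eq:main} ($s_i^{\gm}\cdot\w = \w\cdot s_i$) and the identifications $\hua{T}_{\gm}=\add\supp\Path(\gm)$. First I would recall Reading's combinatorial recursions for $c$-sortable words: if $c=s_1 c_-$ and $s_1$ is not the initial of $\w$, then $\w$ is $c_-$-sortable as a word in $W_{Q_-}$, and $\Inv(\w), \Des(\w), \Cov(\w)$ computed in $W_{Q_-}$ agree with those in $W_Q$ (no reflection involving the vertex $1$ can occur); this case is immediate from the inductive hypothesis applied to $(Q_-,c_-,\w)$ together with the fact, from the proof of the surjectivity corollary, that $\supp\Path(\gm)\subset\D(Q_-)$ and no red vertex equals $1$. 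The substantive case is $\w=s_1\v$ with $\v$ being $c_+$-sortable for $Q_+=\mu_1(Q)$, $c_+=s_1 c s_1$.

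In that case I would track how each of the three sets transforms under prepending $s_1$ and under the APR-tilt identification $\h_{Q_+}=\h_{s_1}=\tilt{(\h_Q)}{\flat}{S_1}$. For inversions: $\Inv(s_1\v)=\{s_1\} \sqcup s_1\Inv(\v)s_1$ by the standard length recursion (since $s_1$ is the initial, $l(s_1\v)=1+l(\v)$), and on the categorical side $\supp\Path(\gm)=\{S_1\}\sqcup\supp\Path(\widetilde\v)$ by $3^\circ$ of Theorem~\ref{thm:main}, while the reflection of $M\in\h_Q-\{S_1\}$ computed in $W_{Q_+}$ is $s_1 s_M s_1$ as established in the proof of $2^\circ$ of Theorem~\ref{thm:main}; matching these term by term gives \eqref{eq:Inv}. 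For descents: $s_i\in\Des(\w)$ iff $l(\w s_i)\leq l(\w)$ iff $s_{\w s_i}\cdot\ldots$, but more directly, combining \eqref{eq:main} with the length criterion, $s_i\in\Des(\w)$ iff $l(s_i^{\gm}\w)\leq l(\w)$ iff $s_i^{\gm}\in\Inv(\w)$ with $S_i^{\gm}$ the $i$-th simple of $\h_\w$; by \eqref{eq:Inv} this happens iff $S_i^{\gm}\in\supp\Path(\gm)$, i.e.\ $S_i^{\gm}$ appears as a tilting simple, and I would argue this is equivalent to $S_i^{\gm}\in\h_Q[-1]$ (hence $i$ red, by $3^\circ$ of Theorem~\ref{thm:keller}) versus $S_i^{\gm}\in\h_Q$ (hence $i$ green and $S_i^{\gm}\notin\supp\Path(\gm)$ since a green simple is a genuine $\h_Q$-module not killed). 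This last equivalence is where I expect to spend the most care. For cover reflections: by definition $t\in\Cov(\w)$ iff $t\w=\w s$ for some $s\in\Des(\w)$; feeding in \eqref{eq:main} this says $t=s_i^{\gm}$ for $s=s_i\in\Des(\w)$, so $\Cov(\w)=\{s_i^{\gm}\mid s_i\in\Des(\w)\}$, and \eqref{eq:Des} rewrites this as $\{s_{S_i^{\gm}}\mid i\in\VR{\gm}\}=\{s_T\mid T\in\VR{\h_{\gm}}\}$, which is \eqref{eq:Cov}.

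The main obstacle, as flagged, is the precise dictionary ``$i$ is a red vertex $\iff$ the $i$-th simple $S_i^{\gm}$ of $\h_\w$ lies in $\h_Q[-1]$ $\iff$ $S_i^{\gm}\in\supp\Path(\gm)$''; the first equivalence is exactly $3^\circ$ of Theorem~\ref{thm:keller}, but the second requires knowing that the tilting simples $T_j^\w$ that survive into $\h_\w$ are precisely the red ones --- intuitively, a simple first appears in $\supp\Path(\gm)$ when we tilt at it, and it stays a simple of all later hearts (and hence of $\h_\w$) iff no subsequent tilt destroys it, which for $c$-sortable sequences should correspond exactly to it ending up shifted into $\h_Q[-1]$. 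I would make this rigorous by an auxiliary lemma: for a $c$-sortable $\w$, $\Sim\h_\w\cap\supp\Path(\gm) = \Sim\h_\w\cap\h_Q[-1]$, proved by the same induction, using that each APR-tilt identification sends $S_1\mapsto S_1[-1]$ and permutes the remaining simples compatibly with the recursion $\supp\Path(\gm)=\{S_1\}\sqcup\supp\Path(\widetilde\v)$. Once this lemma is in hand, all three displayed identities follow by the bookkeeping sketched above, and I would present \eqref{eq:Inv} first, deduce \eqref{eq:Des} from it via \eqref{eq:main} and the length criterion, and deduce \eqref{eq:Cov} from \eqref{eq:Des} directly from the definition of cover reflection.
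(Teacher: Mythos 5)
Your overall route is the same as the paper's: establish \eqref{eq:Inv} by the induction on $l(\w)+\#Q_0$ from Theorem~\ref{thm:main} (splitting into the cases where $s_1$ is or is not the initial of $\w$, and using the APR-tilt conjugation $t_M=s_1s_Ms_1$ to match $\Inv(s_1\v)=\{s_1\}\sqcup s_1\Inv(\v)s_1$ with $\supp\Path(\gm)=\{S_1\}\sqcup\supp\Path(\widetilde{\v})$); then deduce \eqref{eq:Des} from \eqref{eq:Inv} via \eqref{eq:main} and the length criterion; then read off \eqref{eq:Cov} from the definition of cover reflection. That skeleton is exactly what the paper does, and your derivations of \eqref{eq:Des} from \eqref{eq:Inv} and of \eqref{eq:Cov} from \eqref{eq:Des} are correct.

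The gap is in the step you yourself flag as delicate, and your proposed fix does not work as stated. You write ``$s_i^{\gm}\in\Inv(\w)$ iff $S_i^{\gm}\in\supp\Path(\gm)$'' and propose the auxiliary lemma $\Sim\h_{\w}\cap\supp\Path(\gm)=\Sim\h_{\w}\cap\h_Q[-1]$. But $\supp\Path(\gm)=\Ind\hua{T}_{\gm}$ with $\hua{T}_{\gm}=\h_Q\cap\h_{\w}[1]$, so every element of $\supp\Path(\gm)$ lies in $\h_{\w}[1]$, while every simple of $\h_{\w}$ lies in $\h_{\w}$; since $\h_{\w}\cap\h_{\w}[1]=0$, the left-hand side of your lemma is always empty, whereas the right-hand side is the (generally nonempty) set of red simples. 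Taken literally, your criterion would force $\Des(\w)=\emptyset$. The comparison has to be made at the level of reflections, which only see dimension vectors up to sign and shift: for $i$ red one has $S_i^{\gm}\in\h_Q[-1]$, hence $S_i^{\gm}[1]\in\hua{T}_{\gm}=\add\supp\Path(\gm)$ and $s_i^{\gm}=s_{S_i^{\gm}[1]}\in\Inv(\w)$; for $i$ green one must show $\dim S_i^{\gm}\neq\dim T$ for every $T\in\supp\Path(\gm)$, which follows because both are rigid indecomposables of $\h_Q$ (so determined by their dimension vectors) and $S_i^{\gm}\in\h_{\w}$ cannot be isomorphic to any $T\in\h_{\w}[1]$. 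This dimension-vector exclusion is precisely the argument the paper supplies at this point, and it is the ingredient your sketch is missing.
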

\begin{proof}
First of all, as in the proof of Theorem~\ref{thm:main} or \cite[Theorem~4.3]{IT09},
we have \eqref{eq:Inv} by inducting on $l(\w)+\#Q_0$.

For any $j\in\VR{\s}$, by $4^\circ$ of Theorem~\ref{thm:keller},
we have the corresponding simple $S_j^{\gm}$ is in $\h_Q[-1]$
and hence $S_j^{\gm}[1]$ the torsion class $\hua{T}_{\gm}$.
By formula \eqref{eq:Inv}, we know that $s_i^{\gm}$ is in $\Inv(\w)$
and hence $s_i$ is in $\Des(\w)$ by \eqref{eq:main}.

For any $j\notin\VR{\s}$, by $3^\circ$ of Theorem~\ref{thm:keller},
the corresponding simple $S_j^{\gm}$ is in $\h_Q$
but not in the torsion class $\hua{T}_{\gm}$.
Then $\dim S_j^{\gm}$ is not equal to any $\dim T, T\in\hua{T}_{\gm}$
since $\hua{T}_{\gm}$ is a simple in $\h_{\gm}\supset\hua{T}$.
Again, by formula \eqref{eq:Inv}, we know that $s_i^{\gm}$ is not in $\Inv(\w)$
and hence $s_i$ is not in $\Des(\w)$ by \eqref{eq:main}.

Therefore, we obtain \eqref{eq:Des} and \eqref{eq:Cov}.
\end{proof}

In the finite case, there are two more consequences.
The first one is about the supporting trees of the (cluster) exchange graphs.

\begin{corollary}
Let $Q$ be a Dynkin quiver.
For any $\h\in\EG_Q$, there is a unique $c$-sortable word $\w$
such that $\h=\h_{\gm}$.
Equivalently, the Hasse diagram of the $c$-sortable word $\w$ (w.r.t. the weak order)
is isomorphic to a supporting tree of the exchange graph $\EG_Q$.
\end{corollary}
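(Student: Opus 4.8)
The plan is to establish a bijection between $c$-sortable words and the hearts in $\EG_Q$ by combining the map $\w\mapsto\h_{\gm}$ coming from Theorem~\ref{thm:main} with the bijection from the preceding corollary between $c$-sortable words and finite torsion classes in $\h_Q$. First I would observe that, since $Q$ is Dynkin, every heart in $\EG_Q=\{\h\mid\h_Q[-1]\leq\h\leq\h_Q\}$ is finite, so by \eqref{eq:torsion} the assignment $\h\mapsto\hua{T}=\h_Q\cap\h[1]$ is a well-defined map from $\EG_Q$ to the set of torsion classes in $\h_Q$; moreover in the Dynkin case every module category is finite, so \emph{every} torsion class is finite. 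Conversely, from a finite torsion pair $\langle\hua{F},\hua{T}\rangle$ one recovers the backward tilt $\h=\h_Q^\flat$ with respect to $\hua{T}$, and by Lemma~\ref{lem:tiltorder} this lies between $\h_Q[-1]$ and $\h_Q$, hence in $\EG_Q$; the relations at the end of the torsion-pair subsection ($\hua{T}=\h_Q\cap\h_Q^\flat[1]$) show these two maps are mutually inverse. Thus $\EG_Q$ is in natural bijection with the set of (finite) torsion classes in $\h_Q$.

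Next I would feed this through the corollary immediately above, which gives a bijection between $c$-sortable words $\w$ and finite torsion classes via $\w\mapsto\hua{T}_{\gm}=\h_Q\cap\h_{\gm}[1]$. Composing, for each $\h\in\EG_Q$ there is a unique $c$-sortable word $\w$ with $\hua{T}_{\gm}=\h_Q\cap\h[1]$; and since the torsion class determines its backward tilt, this forces $\h_{\gm}=\h$. Uniqueness is then immediate: two $c$-sortable words with $\h_{\gm_1}=\h_{\gm_2}$ have $\hua{T}_{\gm_1}=\hua{T}_{\gm_2}$, so $\w_1=\w_2$ by the injectivity in the previous corollary. This proves the first assertion.

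For the equivalence with the statement about trees, I would recall that $\Path(\gm)$ is, by the preceding corollary, a directed path of maximal length in $\EG_Q$ from $\h_Q$ to $\h_{\gm}$, and that extending $\w$ by one letter (when still $c$-sortable) corresponds to a single simple backward tilt, i.e.\ to traversing one additional edge of $\EG_Q$. Hence the poset of $c$-sortable words under the weak order — which is a tree, each word having a unique predecessor obtained by deleting its last letter — maps isomorphically onto the subgraph of $\EG_Q$ consisting of the union of all the paths $\Path(\gm)$; since the bijection above is onto all of $\EG_Q$ and each vertex is reached by a unique such path, this subgraph is a spanning (supporting) tree of $\EG_Q$, and the map is a graph isomorphism onto it. The main obstacle I expect is the bookkeeping to verify that the edge relation is preserved in both directions: one must check that $\w'$ covers $\w$ in the weak order on $c$-sortable words precisely when $\widetilde{\w'}$ extends $\widetilde{\w}$ by one green mutation and correspondingly $\h_{\gm'}=\tilt{(\h_{\gm})}{\flat}{S}$ for the appropriate simple $S$ — this uses \cite[Lemma~2.5]{R07} and the APR-tilting identification exactly as in the proof of Theorem~\ref{thm:main}, but care is needed because a $c$-sortable word may admit several $c$-sortable extensions, matching the branching of $\EG_Q$ at the corresponding heart.
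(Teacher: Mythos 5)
Your proposal is correct and follows essentially the same route as the paper: the paper's (very terse) proof likewise combines the preceding bijection between $c$-sortable words and finite torsion classes with the fact that in the Dynkin case every torsion class is finite and corresponds to a unique backward tilt of $\h_Q$, i.e.\ a unique heart in $\EG_Q$, and then uses the tree structure of $c$-sortable words under the weak order. Your write-up merely makes explicit the torsion-class/heart correspondence and the edge-matching for the spanning tree, which the paper leaves implicit.
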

\begin{proof}
The corollary follows from $3^\circ$ of Theorem~\ref{thm:main}
and the fact that any torsion class in $\h_Q$ is finite.
\end{proof}

We finish this section by showing a formula of a
$T$-reduced expression for noncrossing partitions via red vertices.
Let $\nc_c$ be Reading's map from $c$-sortable words to noncrossing partitions.
We have the following formula.

\begin{corollary}
Let $Q$ be a Dynkin quiver.
Keep the notation of Theorem~\ref{thm:main}.
Then $\nc_c(\w)$ is a product (with some order) of $\nc_c(\w)$, for $j\in \VR{\gm}$,
and the rank of $\nc_c(\w)$ equals $\#\VR{\gm}$.
\end{corollary}
\begin{proof}
The corollary follows from \eqref{eq:Cov} and Reading's map in \cite[\S~6]{R07}.
\end{proof}

\section{Example: Associahedron of dimension 3}
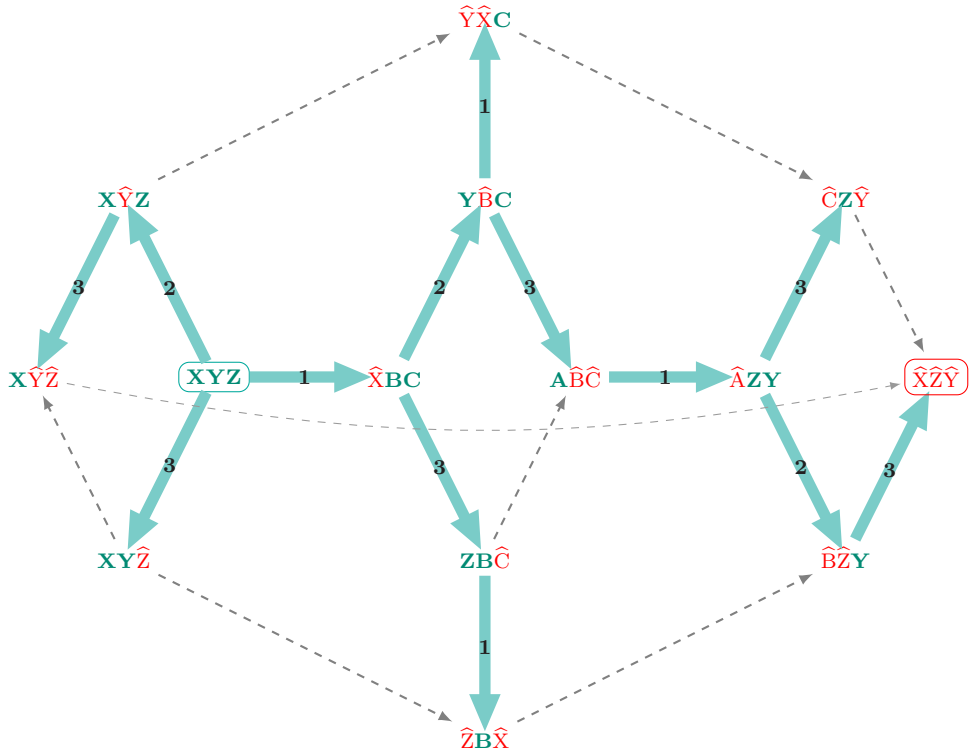
\begin{figure}[b]\centering
\begin{tikzpicture}[scale=1.2, rotate=-90, xscale=-1]
\path (0,0) node[rectangle,rounded corners,draw=white] (x1)  {\Green{X}$\red{\widehat{Y}}$$\red{\widehat{Z}}$};
\path (2,1) node[rectangle,rounded corners,draw=white] (x2)  {\Green{X}$\red{\widehat{Y}}$\Green{Z}};
\path (-2,1) node[rectangle,rounded corners,draw=white] (x3) {\Green{X}\Green{Y}$\red{\widehat{Z}}$};
\path (0,2) node[rectangle,rounded corners,draw=Emerald] (x4)  {\Green{X}\Green{Y}\Green{Z}};
\path (0,4) node[rectangle,rounded corners,draw=white] (x5)  {$\red{\widehat{X}}$\Green{B}\Green{C}};
\path (4,5) node[rectangle,rounded corners,draw=white] (x6)  {$\red{\widehat{Y}}$$\red{\widehat{X}}$\Green{C}};
\path (2,5) node[rectangle,rounded corners,draw=white] (x7)  {\Green{Y}$\red{\widehat{B}}$\Green{C}};
\path (-2,5) node[rectangle,rounded corners,draw=white] (x8) {\Green{Z}\Green{B}$\red{\widehat{C}}$};
\path (-4,5) node[rectangle,rounded corners,draw=white] (x9) {$\red{\widehat{Z}}$\Green{B}$\red{\widehat{X}}$};
\path (0,6) node[rectangle,rounded corners,draw=white]  (X)  {\Green{A}$\red{\widehat{B}}$$\red{\widehat{C}}$};
\path (0,8) node[rectangle,rounded corners,draw=white]  (X1) {$\red{\widehat{A}}$\Green{Z}\Green{Y}};
\path (2,9) node[rectangle,rounded corners,draw=white]  (X2) {$\red{\widehat{C}}$\Green{Z}$\red{\widehat{Y}}$};
\path (-2,9) node[rectangle,rounded corners,draw=white] (X3) {$\red{\widehat{B}}$$\red{\widehat{Z}}$\Green{Y}};
\path (0,10) node[rectangle,rounded corners,draw=white]  (X4) {$\textcolor{white}{\underbrace{XYZ}}$};
\path[-triangle 45, thick, line width=1.5mm]
    (x4)edge[\dexc] node[thick, Black]{\bf{2}}  (x2)
        edge[\dexc] node[thick, Black]{\bf{3}}  (x3)
        edge[\dexc] node[thick, Black]{\bf{1}}  (x5);
\path[-triangle 45, thick, line width=1.5mm]
    (x2)edge[\dexc] node[thick, Black]{\bf{3}}  (x1);
\path[-triangle 45, thick, line width=1.5mm]
    (x5)edge[\dexc] node[thick, Black]{\bf{2}}  (x7)
        edge[\dexc] node[thick, Black]{\bf{3}}  (x8);
\path[-triangle 45, thick, line width=1.5mm]
    (x7)edge[\dexc] node[thick, Black]{\bf{1}}  (x6)
        edge[\dexc] node[thick, Black]{\bf{3}}  (X);
\path[-triangle 45, thick, line width=1.5mm]
    (X1)edge[\dexc] node[thick, Black]{\bf{3}}  (X2)
        edge[\dexc] node[thick, Black]{\bf{2}}  (X3);
\path[-triangle 45, thick, line width=1.5mm]
    (X3)edge[\dexc] node[thick, Black]{\bf{3}}  (X4);
\path[-triangle 45, thick, line width=1.5mm]
    (x8)edge[\dexc] node[thick, Black]{\bf{1}}  (x9);
\path[-triangle 45, thick, line width=1.5mm]
    (X) edge[\dexc] node[thick, Black]{\bf{1}}  (X1);
\path[thick, gray, dashed] (x3) edge[->,>=latex] (x1);
\path[thick, gray, dashed] (x8) edge[->,>=latex] (X);
\path[thick, gray, dashed] (X2) edge[->,>=latex] (X4);
\path[thin,  gray!80!, dashed] (x1) edge[->,>=latex, bend left=11] (X4);
\path[thick, gray, dashed] (x3) edge[->,>=latex] (x9);
\path[thick, gray, dashed] (x9) edge[->,>=latex] (X3);
\path[thick, gray, dashed] (x2) edge[->,>=latex] (x6);
\path[thick, gray, dashed] (x6) edge[->,>=latex] (X2);
\draw (X4) node[rectangle,rounded corners,draw=red]
{$\red{\widehat{X}}$$\red{\widehat{Z}}$$\red{\widehat{Y}}$};
\end{tikzpicture}
\caption{The supporting tree of $\EG_Q$ with respect to $c=s_1 s_2 s_3$}
\label{fig:main}
\end{figure}

\begin{example}\label{ex}
Consider an $A_3$ type quiver $Q\colon2 \leftarrow 1 \rightarrow 3$
with $c=s_1s_2s_3$.
We have the Hasse diagram of $c$-sortable words below.
\[\xymatrix@C=1.5pc{
    &s_2\ar[r]&s_2s_3&s_1s_2|s_1\\
    e\ar[r]\ar[dr]\ar[ur]&s_1\ar[r]\ar[dr]&s_1s_2\ar[r]\ar[ur]&s_1s_2s_3\ar[r]
        &s_1s_2s_3|s_1\ar[dr]\ar[r]&s_1s_2s_3|s_1s_2\ar[r]&s_1s_2s_3|s_1s_2s_3\\
    &s_3&s_1|s_3\ar[r]&s_1s_3|s_1&&s_1s_2s_3|s_1s_3
}\]
Moreover, a piece of the AR-quiver of $\D(Q)$ is as follows
\[
\xymatrix@R=1pc@C=1pc{
    \red{\widehat{Z}}  \ar[dr] &&  \red{\widehat{B}}  \ar[dr]  && \green{Y}  \ar[dr] &&  \green{C}  \ar[dr] \\
    & \red{\widehat{A}}  \ar[ur]\ar[dr] && \red{\widehat{X}}  \ar[ur]\ar[dr] && \green{A}  \ar[ur]\ar[dr] && \green{ X}   \\
    \red{\widehat{Y}}  \ar[ur] &&  \red{\widehat{C}}  \ar[ur] && \green{Z}  \ar[ur] &&  \green{B}  \ar[ur]}
\]
where the green vertices are the indecomposables in $\h_Q$
and the red hatted ones are their shifts by $[-1]$.
Note that $\green{X},\green{Y},\green{Z}$ are the simples $S_1, S_2, S_3$ in $\h_Q$ respectively.

Figure~\ref{fig:main} is the exchange graph $\EG_Q$ (cf. \cite[Figure~1 and 4]{KQ11}).
where we denote a heart $\h_\w$ by the set of its simples $S_1^\w S_2^\w S_3^\w$ (in order).
The green edges are the green mutations in some green mutation sequences
induced from $c$-sortable words.
The number on a green edge indicates which vertex mutates.
Note that the underlying graph of Figure~\ref{fig:main} is the associahedron (of dimension 3),
i.e. the (unoriented) cluster exchange graph of type $A_3$.

Further, Table~\ref{table} is a list of correspondences between
$c$-sortable words, hearts (denoted by their simples as in the Figure~\ref{fig:main}),
descents, cover reflections, inversions and (finite) torsion classes.
Note that this table is consistent with \cite[Table 1]{IT09},
in the sense that the objects in the $j$-th row here are precisely
objects in the $j$-th row there.
\end{example}

\begin{table}[t]
\caption{Example:$A_3$}
\label{table}

\begin{tabular}{c|c|c|c|c}
\\
  \toprule
  $c$-sortable & Heart   &  Descent     &  Cover ref.
        & Torsion class \\
  word $\w$    & $\h_\w$ &  $\Des(\w)$  &  $\Cov(\w)$
        & $\hua{T}_\w$  \\  \midrule
  $s_1 s_2 s_3 | s_1 s_2 s_3$
    & ${\red{\widehat{X}}\red{\widehat{Z}}\red{\widehat{Y}}}$         & $s_1,s_2,s_3 $   & $t_X, t_Y, t_Z $   & $\green{\underline{XBCAZY}}$  \\   \midrule
  $s_1 s_2 s_3| s_1 s_2 $
    & ${\red{\widehat{B}}\red{\widehat{Z}}\green{Y}}$       & $s_1,s_2 $   & $t_B, t_Z $   & $\green{X\underline{B}C\underline{AZ}} $  \\   \midrule

  $s_1 s_2 s_3| s_1 s_3 $
    & ${\red{\widehat{C}}\green{Z}\red{\widehat{Y}}}$       & $s_1,s_3 $   & $t_C, t_Y $   & $\green{XB\underline{CAY}} $  \\   \midrule
  $s_2 s_3 $ &
    ${\green{X}\red{\widehat{Y}}\red{\widehat{Z}}}$         & $s_2,s_3 $   & $t_Y, t_Z $   & $\green{\underline{YZ}} $  \\   \midrule

  $s_1 s_2 s_3 $
    & ${\green{A}\red{\widehat{B}}\red{\widehat{C}}}$       & $s_2,s_3 $   & $t_B, t_C $   & $\green{X\underline{BC}} $  \\   \midrule
  $s_1 s_3| s_1 $
    & ${\red{\widehat{Z}}\green{B}\red{\widehat{X}}}$       & $s_1,s_3 $   & $t_Z, t_X $   & $\green{\underline{XCZ}} $  \\   \midrule

  $s_1 s_2| s_1 $
    & ${\red{\widehat{Y}}\red{\widehat{X}}\green{C}}$       & $s_1,s_2 $   & $t_Y, t_X $   & $\green{\underline{XBY}} $  \\   \midrule
  $s_2 $
    & ${\green{X}\red{\widehat{Y}}\green{Z}}$     & $s_2 $   & $t_Y $   & $\green{\underline{Y}} $  \\   \midrule

  $s_3 $
    & ${\green{X}\green{Y}\red{\widehat{Z}}}$     & $s_3 $   & $t_Z $   & $\green{\underline{Z}} $  \\   \midrule
  $s_1 s_2 s_3| s_1 $
    & ${\red{\widehat{A}}\green{Z}\green{Y}}$     & $s_1 $   & $t_A $   & $\green{XBC\underline{A}} $  \\   \midrule

  $s_1 s_2 $
    & ${\green{Y}\red{\widehat{B}}\green{C}}$     & $s_2 $   & $t_B $   & $\green{X\underline{B}} $  \\   \midrule
  $s_1 s_3 $
    & ${\green{Z}\green{B}\red{\widehat{C}}}$     & $s_3 $   & $t_C $   & $\green{X\underline{C}} $  \\   \midrule

  $s_1 $
    & ${\red{\widehat{X}}\green{B}\green{C}}$     & $s_1 $   & $t_X $   & $\green{\underline{X}} $  \\   \midrule
  $e $
    & ${\green{X}\green{Y}\green{Z}}$   & $\emptyset $   & $\emptyset $   & $\emptyset$  \\
  \bottomrule
\end{tabular}
\[ \]
\textbf{N.B.}$1\colon\;  (t_X,t_Y,t_Z,t_A,t_B,t_C)
    =(s_1,s_2,s_3,s_2 s_3 s_1 s_3 s_2, s_1 s_2 s_1, s_1 s_3 s_1)\,$.

\textbf{N.B.}$2\colon$ The underlines objects in $\hua{T}_{\gm}$
form the wide subcategory $\wide_{\gm}$.
\end{table}

\appendix
\section{Proof of Lemma~\ref{lem:KQ}}\label{app}
Recall some terminology.
For an acyclic quiver $R$,
the \emph{cluster category} $\hua{C}(R)$ of $R$ is the \emph{orbit category} $\D(R)/[-1]\circ\tau$.
A \emph{cluster tilting object} $\mathbf{P}$ in $\hua{C}(R)$ is a maximal rigid object.
Note that every such object has exactly $m$ indecomposable summands, where $m$
is the number of vertices in $R$.
One can mutate a cluster tilting object to get a new one by replacing
any one of its indecomposable summands with another unique indecomposable object in $\hua{C}(R)$.
More precisely, if $\mathbf{P}=\oplus\bigoplus_{j}P_j$, then for any $i$ we have
\[\mu_{P_i}(\mathbf{P})=P_i'\oplus\bigoplus_{j\neq i}P_i,\] where
\begin{eqnarray*}
        P_i'&=&\Cone(P_i \to \bigoplus_{j\neq i} \Irr(P_i,P_j)^*\otimes P_j)\\
            &=&\Cone(\bigoplus_{j\neq i} \Irr(P_j,P_i)\otimes P_j \to P_i)[-1].
\end{eqnarray*}
Moreover, this mutation will induce the mutation,
in the sense of Definition~\ref{def:mutation},
on the corresponding Gabriel quiver of $\mathbf{P}$.

The \emph{cluster exchange graph} $\CEG{}{R}$ of $\hua{C}(R)$ is
the unoriented graph whose vertices are cluster tilting objects
and whose edges correspond to mutations.
For instance, the cluster exchange graph of an $A_2$ quiver is a pentagon.
Buan-Thomas \cite{BT09} associated a \emph{colored quiver} $Q^C(\mathbf{P})$ to each $\mathbf{P}$,
whose degree zero part is the \emph{Gabriel quiver} $Q(\mathbf{P})$ of $\mathbf{P}$.
King-Qiu \cite{KQ11} introduced a modification of this colored quiver,
called \emph{augmented quiver} and denoted by $Q^+(\mathbf{P})$, such that
the degree one part of $Q^+(\mathbf{P})$ is the degree zero part of $Q^C(\mathbf{P})$,
i.e. $Q(\mathbf{P})$.

We need the following two lemmas.
\begin{lemma}(\cite[Corollary~5.12]{KQ11})
The underlying unoriented graph of the exchange graph $\EG_R$ (of hearts)
is canonically isomorphic to
the cluster exchange graph $\CEG{}{R}$.
\end{lemma}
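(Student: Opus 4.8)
The plan is to exhibit mutually inverse maps between the vertex sets of $\EG_R$ and $\CEG{}{R}$ and to show that each sends edges to edges; the bridge between the two sides is the passage — already invoked above in the proof of Proposition~\ref{pp:wide} — from a torsion pair in $\mod\k R$ to a cluster tilting object in $\hua{C}(R)$.

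First I would identify the vertices. By \cite[Theorem~5.7]{KQ11} the graph $\EG_R$ consists precisely of the hearts $\h$ with $\h_R[-1]\leq\h\leq\h_R$, where $\h_R=\mod\k R$, and all of them are finite. By Lemma~\ref{lem:tiltorder} together with the Happel--Reiten--Smal\o{} correspondence, each such $\h$ is the backward tilt $\h_R^{\flat}$ of $\h_R$ along a unique torsion pair $\langle\hua{F},\hua{T}\rangle$ in $\mod\k R$ with $\hua{T}=\h_R\cap\h[1]$, and $\hua{T}$ is functorially finite since $\h$ has finitely many simples. Composing with the classical bijection of Ingalls--Thomas \cite{IT09} between functorially finite torsion classes in $\mod\k R$ (equivalently, support tilting modules) and cluster tilting objects in $\hua{C}(R)$ yields a map $\Phi$ from the vertices of $\EG_R$ to those of $\CEG{}{R}$, $\h\mapsto\mathbf{P}_{\h}$. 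Its inverse runs the same chain of bijections backwards, so $\Phi$ is a bijection on vertices.

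It remains to match edges. An edge of $\EG_R$ at $\h$ is a simple backward tilt $\h\to\tilt{\h}{\flat}{S}$ at a simple $S\in\Sim\h$, and by \cite[Theorem~5.7]{KQ11} every simple of $\h$ produces such a tilt. Under $\Phi$ this should correspond to an Iyama--Yoshino mutation of $\mathbf{P}_{\h}$, i.e.\ the replacement of a single indecomposable summand $P_i$ by $P_i'=\Cone\bigl(P_i\to\bigoplus_{j\neq i}\Irr(P_i,P_j)^{*}\otimes P_j\bigr)$ as in the appendix. The point to verify is that a simple tilt changes the torsion pair $\langle\hua{F},\hua{T}\rangle$ minimally — the short exact sequence it furnishes either enlarges $\hua{T}$ by one indecomposable or shrinks it by one — and that this minimal move is exactly the effect of a single-summand mutation on the associated support tilting module, hence on $\mathbf{P}_{\h}$. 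Conversely every mutation of a cluster tilting object arises this way, since one may mutate at each of its $n$ summands while $\EG_R$ carries a compatible edge at each of the $n$ simples of the corresponding heart. As both graphs are connected, it suffices to check this along paths issuing from the base case $\h_R\leftrightarrow\bigoplus_i P_i$ (the standard cluster tilting object), and the graph isomorphism follows.

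The step I expect to be the main obstacle is precisely this compatibility of the two notions of mutation: translating the Happel--Reiten--Smal\o{} simple tilt of hearts into the exchange triangle defining Iyama--Yoshino mutation of cluster tilting objects, and checking that the summand $P_i$ mutated downstairs matches the simple $S$ at which one tilts upstairs. Keeping track of the identification $\hua{C}(R)=\D(R)/[-1]\circ\tau$ — so that the single tilt upstairs produces the intended neighbour rather than the ``other'' mutation partner of $\mathbf{P}_{\h}$ — is the delicate part; once it is in place, the neighbour count and connectedness finish the argument. This is \cite[Corollary~5.12]{KQ11}, and the above is the shape of that proof.
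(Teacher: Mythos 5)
The paper does not actually prove this lemma: it is imported verbatim as \cite[Corollary~5.12]{KQ11} and used as a black box in the appendix, so there is no in-paper argument to compare yours against. Judged on its own terms, your vertex bijection is the standard and correct one: the hearts in $\EG_R$ are exactly the backward tilts of $\h_R$, these correspond to functorially finite torsion pairs in $\mod\k R$ via $\hua{T}=\h_R\cap\h[1]$, and these correspond to cluster tilting objects in $\hua{C}(R)$ by Ingalls--Thomas. That part is fine.

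The gap is in the edges, and it is twofold. First, the entire content of the corollary is the compatibility of simple HRS tilts with single-summand mutations, and your write-up explicitly defers this (``the point to verify'', ``the main obstacle'') rather than establishing it; as it stands the proposal is a plan, not a proof. Second, there is a mis-statement that matters: it is not true that every simple of $\h$ produces a backward-tilt \emph{edge of $\EG_R$}. Since $\EG_R$ is the full subgraph on the hearts in the interval $[\h_R[-1],\h_R]$, the backward tilt $\tilt{\h}{\flat}{S}$ is an edge of $\EG_R$ only when it remains $\geq\h_R[-1]$; at the heart $\h_R[-1]$, for instance, all $n$ incident edges are incoming (forward tilts from above) and none of its backward tilts lie in $\EG_R$. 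The correct statement --- and the real substance behind the $n$-regularity that matches the $n$ mutations of a cluster tilting object --- is that for each simple $S$ of $\h$ exactly one of $\tilt{\h}{\flat}{S}$ and $\tilt{\h}{\sharp}{S}$ stays in $\EG_R$ (essentially \cite[Theorem~5.7]{KQ11} together with Lemma~\ref{lem:tiltorder}). Once the unoriented edge at $S$ is phrased this way, the identification with the exchange pair $\{P_i,P_i'\}$ and the induction along paths from the base case $\h_R$ can proceed as you describe.
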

\begin{lemma}(\cite[Theorem~8.7]{KQ11})
Let $\h$ be a heart in $\EG_R$ and
$\mathbf{P}$ be the corresponding cluster tilting object in $\CEG{}{R}$.
Then the CY-3 double of the Ext-quiver $\Q{\h}$ of $\h$
is isomorphic to the augmented quiver of $\mathbf{P}$
(\cite[]{KQ11}), i.e.
\[
    \CY{\Q{\h}}{3}\cong Q^+(\mathbf{P}).
\]
\end{lemma}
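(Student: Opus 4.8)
The plan is to transport the problem onto the single ambient $3$-Calabi--Yau category in which both sides naturally live, and then verify the matching by induction along the exchange graph. Using the preceding lemma (the graph isomorphism of the underlying graphs of $\EG_R$ and $\CEG{}{R}$) I first fix, once and for all, the bijection $S_i\leftrightarrow P_i$ between the simples of $\h$ and the indecomposable summands of $\mathbf{P}$; this identifies the vertex sets of $\CY{\Q{\h}}{3}$ and $Q^+(\mathbf{P})$, so it remains only to match arrows degree by degree. The conceptual reason the two quivers should agree is Serre duality: in the relevant $3$-Calabi--Yau setting one has $\Ext^k(S_i,S_j)\cong\Ext^{3-k}(S_j,S_i)^*$ together with $\Ext^3(S_i,S_i)\cong\k$, and these are precisely the data recorded by the operation ``add a reverse arrow of degree $3-k$ for each arrow of degree $k$, and a degree-$3$ loop at every vertex'' defining the CY-$3$ double. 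Thus the whole of $\CY{\Q{\h}}{3}$ is already determined by the positive-degree part of $\Q{\h}$, while the symmetry of the Buan--Thomas colored quiver (which forces the degree-$2$ part of $Q^+(\mathbf{P})$ to be the opposite of its degree-$1$ Gabriel quiver, and supplies the degree-$3$ loops) is the combinatorial shadow of the same duality.

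For the base case I would take $\h=\h_R=\mod\k R$ with the initial cluster tilting object. Since $\k R$ is hereditary, $\Ext^{\geq 2}(S_i,S_j)=0$ and $\Hom(S_i,S_j)=\delta_{ij}\k$, so $\Q{\h_R}$ is simply $R$ placed in degree $1$; its CY-$3$ double is therefore $R$ in degree $1$, $R^{\mathrm{op}}$ in degree $2$, and a degree-$3$ loop at each vertex. On the other side the Gabriel quiver of the initial cluster tilting object is again $R$, so by the definition of the augmented quiver its degree-$1$ part is $R$, and the colored-quiver duality supplies exactly the opposite arrows in degree $2$ and the loops in degree $3$. The two quivers coincide, establishing the base case.

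For the inductive step I would move along a single edge of $\EG_R$, i.e. a simple (backward) tilt $\h\rightsquigarrow\h'$, which corresponds under the graph isomorphism to a single cluster mutation $\mathbf{P}\rightsquigarrow\mathbf{P}'=\mu_{P_i}(\mathbf{P})$. Here I must compare two transformation rules: on the heart side, the change of $\Q{\h}$ under simple tilting, computed from the explicit description of the new simples (the formula invoked in Section~\ref{sec:sst}) and the resulting change in the graded $\End^\bullet$-algebra; and on the cluster side, the Buan--Thomas mutation rule for the colored quiver $Q^C(\mathbf{P})$, transported to $Q^+(\mathbf{P})$. The claim is that after CY-$3$ doubling these two rules are the \emph{same} operation, namely the quiver mutation underlying mutation of the associated Ginzburg dg algebra. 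I would verify this arrow-block by arrow-block around the mutated vertex $i$, checking that the composition/reversal/$2$-cycle-cancellation prescribed by colored-quiver mutation reproduces exactly the degree bookkeeping forced by the long exact sequences computing $\Ext^\bullet(S_a',S_b')$ for the new simples $S_a'$.

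The main obstacle is precisely this inductive compatibility, and within it the control of the higher-degree arrows and the degree-$3$ loops. The subtlety is twofold. First, although the degree-$1$ part (the Gabriel quiver) transforms by the familiar Fomin--Zelevinsky rule, one must confirm that no spurious degree-$2$ arrows are created or destroyed asymmetrically under tilting; Serre duality is what guarantees the degree-$2$ part always remains the opposite of the degree-$1$ part, so it suffices to track degree $1$. Second, the colored quiver records only arrows and not the potential, so one must argue that the cancellation of $2$-cycles on the cluster side matches the cancellation of a degree-$1$ arrow against a degree-$2$ arrow in $\CY{\Q{\h}}{3}$; making this precise is the technical heart of the argument, and it is cleanest when phrased as the equality of both quivers with the quiver of the mutated Ginzburg dg algebra $\End^\bullet_{\D(\Gamma_3 R)}(\widetilde{\mathbf{P}})$, where $\widetilde{\mathbf{P}}$ is the silting lift of $\mathbf{P}$.
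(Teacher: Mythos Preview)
The paper does not prove this lemma at all: it is quoted verbatim from \cite[Theorem~8.7]{KQ11} and used as a black box in the proof of Lemma~\ref{lem:KQ}. There is therefore no ``paper's own proof'' to compare your attempt against. What you have written is a plausible reconstruction of how the result in \cite{KQ11} might be established, and the overall architecture --- identify vertex sets via the exchange-graph isomorphism, verify the base case at the standard heart, then propagate along simple tilts/mutations by checking that the two mutation rules agree after CY-$3$ doubling --- is indeed the natural strategy and is essentially how \cite{KQ11} proceeds.

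Your sketch is honest about where the work lies: the inductive step requires matching the change-of-simples formula under simple tilting against Buan--Thomas colored-quiver mutation, degree by degree. You correctly flag the $2$-cycle cancellation and the behaviour of degree-$2$ arrows as the delicate points, and your suggestion to pass through the Ginzburg dg algebra $\Gamma_3 R$ (where both sides become the same silting-mutation computation) is exactly the device that makes this clean in \cite{KQ11}. As a proof \emph{proposal} this is sound; as a proof it remains a sketch, since the arrow-block verification you describe is not actually carried out. If you want a self-contained argument you will need to either reproduce the computation from \cite{KQ11} or cite it, which is precisely what the present paper does.
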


\begin{example}\label{ex:quivers}
We keep the notations in Example~\ref{ex}.
The left colon of quivers corresponds to the cluster tilting object
$\green{Y}\oplus\green{A}\oplus\green{B}$
and the heart $\h_1$ with simples $\{\red{\widehat{C}}, \green{Z}, \green{B}\}$;
the right colon of quivers corresponds to the cluster tilting object
$\green{Y}\oplus\red{\widehat{Z}}\oplus\green{B}$
and the heart $\h_2$ with simples $\{\red{\widehat{X}}, \red{\widehat{Z}}, \green{B}\}$.
Note that we have
\[
    \green{Y}\oplus\red{\widehat{Z}}\oplus\green{B}
    =\mu_{\green{A}}(\green{Y}\oplus\green{A}\oplus\green{B})
\]
and
\[
    \h_2=\tilt{(\h_1)}{\flat}{\green{Z}}.
\]
\begin{table}[ht]
\caption{Example of various quivers}
\label{quivers}
\begin{tabular}{ccc}\\
{Gabriel quivers}&
  $\xymatrix@R=2.7pc@C=2pc{ &\bullet\ar@{<-}[dr] \\\bullet\ar@{<-}[ur]&&\bullet }$ &
  $\xymatrix@R=2.7pc@C=2pc{ &\bullet\ar@{<-}[dl] \\\bullet&&\bullet\ar@{<-}[ul]
    \ar[ll] }$
\\
  \noalign{\smallskip}
  \noalign{\smallskip}
  \noalign{\smallskip}
{Coloured quivers}&
  $\xymatrix@R=2.7pc@C=2pc{ &\bullet\ar@{<-}@<.8ex>[dr]^{_0}\ar@{<-}@<.2ex>[dl]^{1}
    \\\bullet\ar@{<-}@<.8ex>[ur]^{_0}&&\bullet\ar@{<-}@<.2ex>[ul]^{1}
  }$
  &
  $\xymatrix@R=2.7pc@C=2pc{ &\bullet\ar@<.8ex>[dr]^{_0}\ar@<.2ex>[dl]^{1}
    \\\bullet\ar@<.2ex>[rr]^{1}\ar@<.8ex>[ur]^{_0}&&\bullet\ar@<.2ex>[ul]^{1}
    \ar@<.8ex>[ll]^{_0}
  }$
\\
  \noalign{\smallskip}
  \noalign{\smallskip}
  \noalign{\smallskip}
{Ext-quivers}&
  $\xymatrix@R=2.7pc@C=2pc{ &\bullet\ar@{<-}[dr]^{1} \\\bullet\ar@{<-}[ur]^{1}&&\bullet }$ &
  $\xymatrix@R=2.7pc@C=2pc{ &\bullet\ar@{<-}[dl]_{1} \\\bullet&&\bullet\ar@{<-}[ul]_{1}
    \ar@{<-}[ll]^{2} }$ \\
\noalign{\smallskip}
\noalign{\smallskip}
\noalign{\smallskip}
\noalign{\smallskip}
\noalign{\smallskip}
\noalign{\smallskip}
{\xymatrix@R=.5pc@C=.5pc{\text{Augmented quivers}\\ \mid\mid \\\CY{\text{Ext-quivers}}{3}}}&
  $\quad\xymatrix@R=2.7pc@C=2pc{ &\bullet  \ar@(ul,ur)[]^3
    \ar@{<-}@<.8ex>[dr]^{1}\ar@{<-}@<.2ex>[dl]^{2}
      \\ \bullet\ar@(l,d)[]_3\ar@{<-}@<.8ex>[ur]^{1}&&
        \bullet\ar@(d,r)[]_3\ar@{<-}@<.2ex>[ul]^{2}
  }\quad$
  &
  $\quad\xymatrix@R=2.7pc@C=2pc{ &\bullet  \ar@(ul,ur)[]^3
    \ar@<.8ex>[dr]^{1}\ar@<.2ex>[dl]^{2}
      \\ \bullet\ar@(l,d)[]_3\ar@<.2ex>[rr]^{2}\ar@<.8ex>[ur]^{1}&&
        \bullet\ar@(d,r)[]_3\ar@<.2ex>[ul]^{2}
          \ar@<.8ex>[ll]^{1}
  }\quad$
  \\
\end{tabular}$\qquad$
\end{table}
\end{example}

Now take $R=\Qpe$ and then the heart $\widetilde{\h_\s}$ in $\EG_Q$ will correspond to
a cluster tilting set $\widetilde{\mathbf{P}}_\s$ in the cluster category $\hua{C}(\Qpe)$ with
\begin{gather}\label{eq:app}
        \CY{\Q{\h_\s}}{3}\cong Q^+(\widetilde{\mathbf{P}}_\s).
\end{gather}
Following the mutation procedure,
we deduce that $\Qpe_\s$ is isomorphic to the Gabriel quiver of the $\widetilde{\mathbf{P}}_\s$ and
hence the degree one part of \eqref{eq:app}, as required.


\affiliationone{
YU QIU\\
Institutt for matematiske fag\\
NTNU, Trondheim\\
Norway.
\email{yu.qiu@bath.edu}
}
\end{document}